\newtheorem{thm}{Theorem}
\newtheorem{cor}{Corollary}
\newtheorem{conjecture}{Conjecture}
\newtheorem{prop}{Proposition}
\newtheorem{lemma}{Lemma}
\theoremstyle{definition}
\newtheorem{example}{Example}
\newtheorem{definition}{Definition}
\newtheorem{remark}{Remark}
\newtheorem*{algorithm}{Triangulation Algorithm}
\newcommand{\Z}{\mathbb{Z}}
\newcommand{\R}{\mathbb{R}}
\newcommand{\D}{\mathcal{D}}
\newdimen\R
\title{Friezes over $\Z[\sqrt{2}]$} 
\author{Esther Banaian, Libby Farrell, Amy Tao, Kayla Wright, Joy Zhichun Zhang}
\begin{document}

\maketitle
\begin{abstract}
    A frieze on a polygon is a map from the diagonals of the polygon to an integral domain which respects the Ptolemy relation. Conway and Coxeter previously studied positive friezes over $\mathbb{Z}$ and showed that they are in bijection with triangulations of a polygon.  We extend their work by studying friezes over $\Z[\sqrt{2}]$ and their relationships to dissections of polygons. We largely focus on the characterization of unitary friezes that arise from dissecting a polygon into triangles and quadrilaterals. We identify a family of dissections that give rise to unitary friezes and conjecture that this gives a complete classification of dissections which admit a unitary frieze. 
\end{abstract}
    

\section{Introduction}

In this paper, we will study friezes. A frieze is a ring homomorphism from a cluster algebra $\mathcal{A}(Q)$ to an integral domain $R$. When the cluster algebra arises from surface $S$ with marked points $M$, the generators of the algebra correspond to arcs on the surface with relations provided by skein relations \cite{fomin2008cluster}. Therefore, a frieze from such a cluster algebra can instead be viewed as a map from the arcs on $(S,M)$ to $R$ which respects skein relations.  We will mainly take this latter point of view. 

The study of friezes in fact predates the study of cluster algebras. Finite frieze patterns were first studied by Coxeter in \cite{Coxeter}. Frieze patterns are certain arrays of numbers which satisfy a local relation (the \emph{diamond condition}). Finite frieze patterns and friezes on polygons are in bijection; we can interpret a frieze pattern as listing all the values of a frieze. We will usually use the language of a frieze in this article for notational convenience.

Conway and Coxeter showed that finite frieze patterns with entries in $\mathbb{Z}_{\geq 0}$ are in bijection with triangulated polygons \cite{CCFrieze}. Caldero and Chapoton show that these finite frieze patterns also have a connection to both cluster algebras of type $A$ and the module category of a path algebra from a type $A$ quiver \cite{caldero2006cluster}.
Friezes with values in $\mathbb{Z}_{\geq 0}$ associated to cluster algebras of types $\widetilde{A}$ and $D$ were studied in \cite{GunawanSchiffler} and \cite{FontainePlamondon} respectively. 

Recently, Holm and J{\o}rgensen investigated friezes associated to dissections of polygons \cite{HJ}. The sizes of the subpolygons involved in the dissection determine the integral domain the frieze takes values in. Holm and J{\o}rgensen show that there is a bijection between dissections that divide an $n$-gon into $p$-gons and friezes on an $n$-gon with values in $\mathbb{Z}[2\cos(\pi/p)]$ which send every diagonal of the form $(i,i+2)$to a multiple of $2\cos(\pi/p)$; that is, every such frieze can be seen as arising from a dissection. This leads to the natural question:  Is there a similar characterization of the friezes from more general dissections? In this article, we focus our attention on dissections into triangles and quadrilaterals. Friezes from such dissections were also studied by Andritsch, who compared them to friezes from certain triangulations \cite{andritsch2020note}. In Section \ref{sec:TypesOfFriezes}, we provide examples to show that some natural first choices for characterizations do not adequately describe the set of friezes from dissections into triangles and quadrilaterals. That is, for each characterization we exhibit an example of such a frieze which does not arise from a dissection.

 One type of frieze investigated in Section \ref{sec:TypesOfFriezes} is a \emph{unitary frieze}. We say that a frieze $f$ on a surface $S$ over a ring $R$ is unitary if there exists a triangulation $T = \{\tau_1,\ldots,\tau_n\}$ of $S$ such that each $f(\tau_i)$ is a unit in $R$. We let $R^\times$ denote the set of units in $R$. In Conjecture \ref{conj:UnitaryConj}, we propose that a dissection of a polygon into triangles and quadrilaterals will produce a unitary frieze if and only if we can decompose the dissection into \emph{towers}. A tower, defined in Section \ref{sec:Towers}, is a dissection of a polygon into a straight row of quadrilaterals with a triangle on one end. In Theorem \ref{thm:Towers}, we verify that every dissection which can be decomposed into towers provides a unitary frieze. 

We prove that the opposite direction of this conjecture is true in several cases. In Section \ref{sec:Separated}, we consider dissections where the sets of triangles and quadrilaterals are separated from each other. In Section \ref{sec:Type3}, we consider dissections where every vertex is adjacent to at most three subpolygons. Part of our proof of the result for the latter type of dissection (Theorem \ref{thm:TypeThreeNonTowerFails}) involves casework to show various types of arcs that form triangles with arcs from towers cannot have unit weight under the frieze from the dissection; more details about these cases are provided in Appendix \ref{sec:AppA}. 


\section{Friezes}\label{sec:TypesOfFriezes}

\subsection{Background}

We think of a \emph{polygon} or $n$\emph{-gon} as a finite set $V=\{0,1,\dots, n-1\}$ of vertices with the natural cyclic ordering.  Arcs in the polygon are denoted by $(i,j)$ for two vertices $i \neq j$. We say that two arcs $(i,j)$ and $(k,\ell)$ \emph{cross}  when either $i < k < j < \ell$ or $i < \ell < j < k$, working cyclically modulo $n$. Arcs of the form $(i,i+1)$ are boundary arcs and thus never cross any other arcs.  Note that, while some figures in this article will not look convex in order to stress certain patterns, we always will assume we are working with convex polygons.

While friezes can be defined in more general settings, our article focuses on friezes from polygons so we will define a frieze in this context. Our definition largely follows the definition of a frieze in \cite{HJ}.
    
    \begin{definition}\label{def:frieze}
    Let $P$ be a polygon with vertex set $V$ and let $R$ be an integral domain. A \textit{frieze on $P$} is a map $f:V\times V\to R$ where the following conditions are satisfied:
    \begin{enumerate}
        \item $f(i,j)=0$ if and only if $i=j$
        \item $f(i,i+1) = f(i,i-1) =1$
        \item $f(i,j)=f(j,i)$
        \item If $(i,j)$ and $(k,\ell)$  are crossing diagonals of $P$, then we have the Ptolemy relation $f(i,j) f(k,\ell)=f(i,\ell)f(j,k) + f(i,k)f(j,\ell)$. \end{enumerate}
        \begin{center}
\small{
	\begin{tikzpicture}[scale = 1]
	\draw (0,1) rectangle (3,3);
	\draw (0,1) -- (3,3);
	\draw (0,3) -- (3,1);
	\draw (-0.25, 1) node {$k$};
	\draw (-0.25, 3) node {$i$};
	\draw (3.25, 1) node {$j$};
	\draw (3.25, 3) node {$\ell$};
	\end{tikzpicture}}
\end{center}
    
    \end{definition}

 We refer to the value $f(i,j)$ as the \emph{weight} of the arc $(i,j)$.  We give an example of a frieze on a pentagon over $\mathbb{Z}$.     
    
\begin{example}\label{exmp:frieze}
Because of conditions $1$-$3$ in Definition \ref{def:frieze}, it suffices to know $f(i,j)$ where $i+1 < j$. 

The following non-trivial edge weights satisfy all Ptolemy relations between them and thus give a frieze on a pentagon:
$f(0,2)=f(0,3) = 1, f(1,3) = f(2,4)=2$ and $f(1,4)=3$. For example, the arcs in the blue quadrilateral in Figure \ref{fig:Ex1} must satisfy $f(1,4) \cdot f(0,2)= f(0,1) \cdot f(2,4) + f(0,4) \cdot f(1,2)$ and one can verify that this holds for the specified weights. In fact, once we specify $f(0,2)$ and $f(0,3)$, the other values are forced, and similarly for any other pair of arcs which triangulate the pentagon. 

\begin{figure}
\centering
\begin{tikzpicture}[scale = 1]
\draw[thick,blue] (90:2\R) -- (90+72:2\R);
\draw (90+72:2\R) -- (90+2*72:2\R);
\draw (90+2*72:2\R) -- (90+3*72:2\R);
\draw[thick,blue] (90+3*72:2\R)-- (90+4*72:2\R);
\draw[thick,blue] (90+4*72:2\R) -- (90:2\R);
\draw (90:2\R) -- (90+2*72:2\R);
\draw[thick,blue] (90+72:2\R) -- (90+3*72:2\R);
\draw (90+72:2\R) -- (90-72:2\R);
\draw (90+72*2:2\R) -- (90+72*4:2\R);
\draw (90:2\R) -- (90+3*72:2\R);
\node[above] at (90:2\R) {$0$};
\node[right] at (90+4*72:2\R) {$1$};
\node[below] at (90+2*72:2\R) {$3$};
\node[below] at (90+3*72:2\R) {$2$};
\node[left] at (90+72:2\R) {$4$};
\end{tikzpicture}
\caption{Illustrating the diagonals in a pentagon. Given two intersecting diagonals, such as $(0,2)$ and $(1,4)$, it suffices to consider the quadrilateral with vertices $0,1,2,4$.}\label{fig:Ex1}
\end{figure}

\end{example}

The next example will be a building block for our more general \emph{friezes from dissections}.  Let $U_k(x)$ denote the \emph{normalized Chebyshev polynomials of the second kind}, determined by initial conditions $U_{-1}(x) = 0, U_0(x) = 1$ and the recurrence,\[
U_k(x)= U_1(x)U_{k-1}(x) - U_{k-2}(x),
\]
and let $\lambda_p:= 2\cos(\pi/p)$.  Then, the \emph{Euclidean frieze}, $\mathcal{L}_p$ on a $p$-gon is given by setting $\mathcal{L}_p(i,j) = U_{\vert j-i \vert - 1}(\lambda_p)$.
For example, if $p = 5$, we define $\mathcal{L}_p(i,i+2) = 2\cos(\pi/5) = \frac{1+\sqrt{5}}{2}$ for all $0 \leq i \leq 4$. Including this with the boundary conditions,  $\mathcal{L}_p(i,i) =0$ and $\mathcal{L}_p(i,i+1) = 1$ completely defines the frieze on the pentagon. 




\subsection{Types of Friezes}

Recall that a \emph{dissection} (also called a  \emph{partial triangulation}) of a polygon $P$ is a set of pairwise non-crossing arcs on $P$.  A \emph{$p$-angulation} is a dissection that divides $P$ into $p$-gons.  Given a dissection $\D$ of a polygon $P$, Holm and J{\o}rgensen define a frieze $f_{\D}$ on $P$ which restricts to the Euclidean frieze $\mathcal{L}_{p_i}$ whenever evaluated on a pair of vertices which lie on the same $p_i$-gon \cite{HJ}. In particular, if $(i,j)$ is an arc in $\D$, $f_{\D}(i,j) = 1$ since this arc is a boundary arc of multiple subpolygons. If $(i,j)$ crosses at least one arc in $\D$, we can determine $f_\D(i,j)$ by iteratively using Condition 4 from Definition \ref{def:frieze} at each intersection of $(i,j)$ and $\D$. We call $f_{\D}$ the \emph{frieze from dissection $\D$}.

For instance, Example \ref{exmp:frieze} is a frieze from a dissection as it arises from the dissection using arcs $(0,2)$ and $(0,3)$. This dissection is in fact a triangulation, and thus the entries of the frieze are all in $\mathbb{Z}$. We give an example of a frieze from a dissection which is not a triangulation.

\begin{example}\label{ex:DissectionFrieze}

\begin{figure}
\centering
\begin{tikzpicture}[scale = 1]
\draw (90:2\R) -- (90+72:2\R)-- (90+2*72:2\R) -- (90+3*72:2\R)-- (90+4*72:2\R) -- (90:2\R);
\draw (90:2\R) -- (90+2*72:2\R);
\node[above] at (90:2\R) {$0$};
\node[right] at (90+4*72:2\R) {$1$};
\node[below] at (90+2*72:2\R) {$3$};
\node[below] at (90+3*72:2\R) {$2$};
\node[left] at (90+72:2\R) {$4$};
\end{tikzpicture}
\caption{A dissection of a pentagon}\label{fig:Ex2}
\end{figure}

Consider the dissection of a pentagon in Figure \ref{fig:Ex2}.  Since $(0,2)$ and $(1,3)$ are diagonals of a sub-quadrilateral, we set $f_\D(0,2) = f_\D(1,3) = U_1(\lambda_4) = 2\cos(\pi/4) = \sqrt{2}$. We also know $f_\D(0,3) = 1$ since $(0,3) \in \D$. We can compute $f_\D(2,4)$ by resolving the intersection of $(2,4)$ and $(0,3)$, \[
f_\D(2,4)  = f_\D(2,4)\cdot f_\D(0,3) = f_\D(0,4) \cdot f_\D(2,3) + f_\D(0,2)\cdot  f_\D(3,4)  = 1\cdot 1+ \sqrt{2}\cdot 1  = 1+\sqrt{2}.
\]
A similar calculation finds $f_\D(1,4) = 1+\sqrt{2}$.
\end{example}

We will focus on friezes from dissections that divide a polygon into triangles and quadrilaterals. Accordingly, our corresponding friezes will have values in $\Z[\sqrt{2}]$. 
We define several other types of friezes over $\Z[\sqrt{2}]$ and compare them to friezes from dissections. We first give two descriptions of friezes over $\mathbb{Z}[\sqrt{2}]$ with elementary properties.

\begin{definition}\label{def:types1}
Let $f$ be a frieze on an $n$-gon $P$ over $\mathbb{Z}[\sqrt{2}]$.
\begin{enumerate}
    \item We say $f$ is a $\mathbb{Z}[\sqrt{2}]_{\geq 1}$ frieze if, for all $0 \leq i < j \leq n-1$, $f(i,j) \geq 1$. 
    \item We say $f$ is a $\mathbb{Z}_{\geq 0}[\sqrt{2}]$ frieze if, for all $0 \leq i < j \leq n-1$, $f(i,j) = a+b\sqrt{2}$ and $a,b \in \mathbb{Z}_{\geq 0}$. By the definition of a frieze, we cannot have $a=b=0$.
\end{enumerate}
\end{definition}

The first frieze defined in Definition \ref{def:types1} could also be called a \emph{super-unital frieze}. A related type of frieze is a \emph{unitary frieze}; these were studied in connection to cluster algebras in \cite{GunawanSchiffler}. Unitary friezes will play a leading role in the remainder of the article

\begin{definition}\label{def:UnitaryFrieze}
Let $f$ be a frieze on a polygon $P$ over an integral domain $R$. We say that $f$ is \emph{unitary} if there exists a triangulation $T = \{\tau_1,\ldots,\tau_m\}$ of $P$ such $f(\tau_i) \in R^\times$ for all $1 \leq i \leq m$. In this case, we will also refer to $T$ as a \emph{unitary triangulation}. 
\end{definition}

Recall the norm $N(\cdot)$ of an element $a+b\sqrt{2}  \in \Z[\sqrt{2}]$ is given by $N(a+b\sqrt{2}) = \vert a^2 - 2b^2 \vert$. The units in $\Z[\sqrt{2}]$ (that is, elements with norm 1) are exactly those elements of the form $(1+\sqrt{2})^m$ for $m \in \Z$. Let $\ell_m = (1+\sqrt{2})^m$.

Note that the frieze in Example \ref{ex:DissectionFrieze} is a $\mathbb{Z}[\sqrt{2}]_{\geq 1}$ frieze and a $\mathbb{Z}_{\geq 0}[\sqrt{2}]$ frieze. Moreover, this frieze is unitary, with $(2,4)$ and $(1,4)$ forming a triangulation with unit weights under $f_\D$. 

We now describe relationships amongst the defined types of friezes. For convenience, in our examples for non-containments, we will give the values of a frieze in the form of a frieze pattern. 

\begin{definition}
Given $n \geq 3$, a (finite) \emph{frieze pattern} of width $n$ over a ring $R$ is an array of $n$ rows consisting of entries $m_{i,j}$ with $i,j \in \mathbb{Z}, 0 \leq j-i \leq n-1$ such that
\begin{itemize}
    \item $m_{i,j} \in R$,
    \item the $k$-th row consists of entries $m_{i,i+k-1}$,
    \item $m_{i,i} = m_{i, i+n-1} = 0$ and $m_{i,i+1} = m_{i,i+n-2} = 1$ for all $i \in \mathbb{Z}$,
    \item every other row is shifted so that we have diamonds $\begin{array}{ccc} & m_{i,j} & \\ m_{i-1,j}&& m_{i,j+1} \\ & m_{i-1,j+1} & \end{array}$, and
    \item each diamond satisfies the \emph{diamond condition}: $m_{i-1,j} m_{i,j+1} - m_{i,j} m_{i-1,j+1} = 1$.
\end{itemize}
\end{definition}

Given a frieze $f$ on an $n$-gon, one can define a frieze pattern of width $n+1$ by setting $m_{i,j} = f(\overline{i},\overline{j})$ for $j-i < n+1$ where $\overline{i} = i \pmod{n}$ and $m_{i,i+n} = m_{i,i} = 0$. Given a frieze pattern with $n$ rows, it must be that the rows are $(n-3)$-periodic \cite{CCFrieze}, so one can equivalently define a frieze given a frieze pattern. 

\begin{prop} \label{prop:Containments}
\begin{enumerate}
    \item The set of friezes from dissections is strictly contained in the set of $\mathbb{Z}_{\geq 0}[\sqrt{2}]$ friezes.
    \item
    The set of $\mathbb{Z}_{\geq 0}[\sqrt{2}]$ friezes is strictly contained in the set of  $\mathbb{Z}[\sqrt{2}]_{\geq 1}$ friezes
    \item The set of unitary friezes is incomparable with the sets of friezes from dissections, $\mathbb{Z}_{\geq 0}[\sqrt{2}]$ friezes, and $\mathbb{Z}[\sqrt{2}]_{\geq 1}$ friezes.
\end{enumerate}
\end{prop}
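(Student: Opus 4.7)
The plan is to address each item with a mix of induction and explicit examples. For the containment in (1), I would induct on the number of arcs of $\D$ that $(i,j)$ crosses. Boundary arcs and arcs of $\D$ have weight $1 \in \Z_{\geq 0}[\sqrt{2}]$. For the inductive step, pick any $(k,\ell) \in \D$ crossing $(i,j)$; since $f_\D(k,\ell) = 1$, the Ptolemy relation reduces to
\[
f_\D(i,j) = f_\D(i,k)\, f_\D(j,\ell) + f_\D(i,\ell)\, f_\D(j,k),
\]
where each arc on the right crosses strictly fewer arcs of $\D$ than $(i,j)$. Since $\Z_{\geq 0}[\sqrt{2}]$ is closed under sums and products, induction completes the step. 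For the containment in (2), the argument is elementary: if $a,b \in \Z_{\geq 0}$ with $(a,b) \neq (0,0)$, then $a + b\sqrt{2} \geq \min(1,\sqrt{2}) = 1$.

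For the strictness of (2), it suffices to produce a frieze containing an entry of the form $3 - \sqrt{2} \approx 1.586$, which lies in $\Z[\sqrt{2}]_{\geq 1}$ but has a negative $\sqrt{2}$-coefficient. For the strictness of (1), I would exhibit a frieze pattern whose entries all lie in $\Z_{\geq 0}[\sqrt{2}]$ but in which some $(i,i+2)$-weight equals neither $1$ nor $\sqrt{2}$; in any dissection of a polygon into triangles and quadrilaterals, such an arc must equal $1$ (if it lies in $\D$) or $\sqrt{2}$ (if it is a diagonal of a sub-quadrilateral), so such a frieze cannot arise from a dissection.

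For (3), I would supply two explicit examples. The first is the pentagon frieze $f$ determined by assigning the unit weight $f(0,2) = f(0,3) = -\ell_1 = -(1+\sqrt{2})$ to the triangulation $T=\{(0,2),(0,3)\}$. Iterating Ptolemy yields $f(1,3) = f(2,4) = 2 - \sqrt{2}$ and $f(1,4) = 5 - 4\sqrt{2} < 0$; this frieze is unitary by construction but has a negative value, so it belongs to none of the other three classes (and in particular, by part~(1), is not a dissection frieze). The second example is the hexagon dissection $\D = \{(0,3)\}$ into two quadrilaterals. Iterating Ptolemy shows that the only diagonal of unit weight is $(0,3)$ itself, while the remaining diagonals take weights $\sqrt{2}$, $2\sqrt{2}$, or $3$, none of which is a unit of $\Z[\sqrt{2}]$. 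Since any triangulation of the hexagon uses three diagonals, $f_\D$ admits no unitary triangulation; however, it is a dissection frieze and therefore also a $\Z_{\geq 0}[\sqrt{2}]$ frieze and a $\Z[\sqrt{2}]_{\geq 1}$ frieze. This single example simultaneously witnesses the non-containments of the dissection, $\Z_{\geq 0}[\sqrt{2}]$, and $\Z[\sqrt{2}]_{\geq 1}$ classes in the unitary class.

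I expect the main obstacle to be verifying the pentagon example: assigning arbitrary unit values to a triangulation does not always yield a valid frieze, since other weights can become zero and violate Definition~\ref{def:frieze}(1) (for instance, the alternate choice $f(0,2) = f(0,3) = -1$ forces $f(1,3) = 0$). The specific choice $-\ell_1$ must be verified to avoid such degeneracies at every diagonal, and the consistency of the two Ptolemy derivations of $f(1,4)$ (via $(0,2)$ and via $(0,3)$) must be checked. Once these routine but careful verifications are made, the remaining arguments combine the induction above with elementary closure properties of $\Z_{\geq 0}[\sqrt{2}]$.
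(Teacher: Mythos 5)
Most of your plan is sound, and parts of it genuinely differ from the paper: for part (1) the paper cites the matching formula of \cite{banaian2021periodic} for the containment, whereas your induction on the number of crossings with $\D$ is a workable self-contained alternative (though your base case must also include arcs with zero crossings that are diagonals of a sub-quadrilateral, which have weight $\sqrt{2}$, not just boundary arcs and arcs of $\D$). Your two examples for part (3) check out: the pentagon frieze with $f(0,2)=f(0,3)=-(1+\sqrt{2})$ does give $f(1,3)=f(2,4)=2-\sqrt{2}$ and $f(1,4)=5-4\sqrt{2}$, all nonzero and Ptolemy-consistent, and the hexagon $4$-angulation $\{(0,3)\}$ has exactly one diagonal of unit weight, so it cannot be unitary. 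The paper uses smaller examples (a quadrilateral with $f(0,2)=-1+\sqrt{2}$, and the Euclidean frieze $\mathcal{L}_4$), but yours serve the same purpose.

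The genuine gap is in the strictness of part (1). You propose to find a $\mathbb{Z}_{\geq 0}[\sqrt{2}]$ frieze in which some weight $f(i,i+2)$ is neither $1$ nor $\sqrt{2}$, on the grounds that in a dissection frieze every such weight is $1$ (arc in $\D$) or $\sqrt{2}$ (diagonal of a sub-quadrilateral). That premise is false: the arc $(i,i+2)$ may cross several arcs of $\D$ incident to the vertex $i+1$, and then $f_\D(i,i+2)=\sum_p \lambda_p$ over the subpolygons meeting $i+1$, which can be $2$, $1+\sqrt{2}$, $2\sqrt{2}$, $2+\sqrt{2}$, $3$, etc. (Indeed, in the paper's own Example \ref{exmp:frieze} one has $f(1,3)=2$, and in Example \ref{ex:DissectionFrieze} one has $f_\D(2,4)=1+\sqrt{2}$.) So an example built to your specification would not rule out being a dissection frieze. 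The paper's argument instead exhibits an octagon frieze pattern with \emph{no} entry equal to $1$ among the interior rows: since $f_\D(\gamma)=1$ exactly when $\gamma\in\D$, such a frieze could only come from the empty dissection, i.e.\ it would have to be the Euclidean frieze $\mathcal{L}_8$, which it visibly is not. You would need an argument of this kind. Separately, for the strictness in part (2) you only assert that a frieze with an entry like $3-\sqrt{2}$ exists; producing one is the actual content (a quadrilateral will not work, since $f(0,2)f(1,3)=2$ forces both diagonals into $\mathbb{Z}_{\geq 0}[\sqrt{2}]$ if both are to be $\geq 1$), and the paper has to go to a hexagon to exhibit one.
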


\begin{proof}
1) The fact that every frieze from a dissection is also a $\mathbb{Z}_{\geq 0}[\sqrt{2}]$ frieze is a consequence of the concept of \emph{traditionally-weighted matchings}, a combinatorial interpretation of entries of a frieze pattern from a dissection given in \cite{banaian2021periodic}. To see that this containment is strict, consider the following frieze pattern which gives the values of a frieze on an octagon.

\begin{center}
\scalebox{0.8}{
$\begin{array}{cccccccccccccccccccccc}
 &0&&0&&0&&0&&0 &&0 &&\\
 & &1&&1&&1&&1&&1&\\
 &\sqrt{2} && 1 + \sqrt{2} && \sqrt{2}&&1 + \sqrt{2} &&\sqrt{2} && 1 + \sqrt{2} && \\
   &&1 + \sqrt{2}&& 1 + \sqrt{2} &&1 + \sqrt{2} &&1 + \sqrt{2}&& 1 + \sqrt{2} &\\
 &2 + \sqrt{2} && 2 &&2 +\sqrt{2} && 2 &&2+ \sqrt{2}&& 2 && \\
 &&1 + \sqrt{2}&& 1 + \sqrt{2} &&1 + \sqrt{2} &&1 + \sqrt{2}&&1 + \sqrt{2} &\\
  &\sqrt{2} && 1 + \sqrt{2} && \sqrt{2}&& 1 + \sqrt{2} &&\sqrt{2} && 1 + \sqrt{2} && \\
 & &1&&1&&1&&1&&1&\\
 &0&&0&&0&&0&&0&&0 && \\
  \end{array}$}
 \end{center}

Since there are no entries 1 in the middle five rows of the frieze pattern, there is no non-boundary arc $\tau$ in the octagon such that $f_{\mathcal{D}}(\tau) = 1$. This means if this frieze came from a dissection, there would be no arcs in the dissection. However, this is not the Euclidean frieze $\mathcal{L}_8$ since it is not 1-periodic and the value $2\cos(\pi/8)$ does not appear in the first row.Therefore, the frieze given by this frieze pattern cannot come from a dissection. 
  
2) It is clear that every $\mathbb{Z}_{\geq 0}[\sqrt{2}]$ frieze is a $\mathbb{Z}[\sqrt{2}]_{\geq 1}$ frieze. 
To see that this containment is strict, consider the following frieze pattern giving the values of a frieze on a hexagon.

 \begin{center}
\scalebox{0.9}{$\begin{array}{cccccccccccccccccccccc}
 &0&&0&&0&&0&&0&&\\
 & &1&&1&&1&&1&&1&\\
 &1 + \sqrt{2} &&\sqrt{2}&&3 - \sqrt{2}&&1 + \sqrt{2} &&\sqrt{2}&&\\
   &&1 + \sqrt{2}&&-3 + 3\sqrt{2}&&2\sqrt{2}&&1 + \sqrt{2}&&-3 + 3\sqrt{2}&\\
 &1 + \sqrt{2} &&\sqrt{2}&&3 - \sqrt{2}&&1 + \sqrt{2} &&\sqrt{2}&&\\
 & &1&&1&&1&&1&&1&\\
 &0&&0&&0&&0&&0&&\\
  \end{array}$}
   \end{center}

3) To see that there are unitary friezes that are not $\mathbb{Z}[\sqrt{2}]_{\geq 1}$ friezes, consider the frieze $f$ on a quadrilateral with vertices $0,1,2,3$ which has $f(0,2) = -1+\sqrt{2}$ and $f(1,3) = 2+\sqrt{2}$. Since the diagonal $(0,2)$ triangulates the quadrilateral and $-1+\sqrt{2} \in \mathbb{Z}[\sqrt{2}]^\times$, $f$ is a unitary frieze but $-1+\sqrt{2} < 1$ so $f$ is not a $\mathbb{Z}[\sqrt{2}]_{\geq 1}$ frieze, implying it is also not a $\mathbb{Z}_{\geq 0}[\sqrt{2}]$ frieze nor a frieze from a dissection. 

The Euclidean frieze $\mathcal{L}_4$ on a quadrilateral is an example of a frieze from a dissection that is not unitary.
\end{proof}

\begin{remark}
Coxeter and Conway's Theorem in \cite{CCFrieze} shows that all of the types of friezes considered here are equivalent when working over $\mathbb{Z}$. That is, all friezes on a polygon with entries in $\mathbb{Z}_{\geq 0}$ are unitary; the arcs with unit weight comprise a triangulation of the polygon. 
\end{remark}

\section{Tower Dissections give Unitary Friezes}\label{sec:Towers}

In this section, we investigate friezes which are both from a dissection and unitary. We begin with a set of dissections that never give a unitary frieze.  Recall a $4$-angulation of a polygon $P$ is a dissection that divides $P$ into quadrilaterals; necessarily, $P$ must be a $2n$-gon for $n > 1$. 

\begin{lemma}\label{lem:friezeFrom4angulationNotUnitary}
A frieze from a $4$-angulation of a polygon is never unitary.
\end{lemma}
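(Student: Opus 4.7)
The plan is to exploit a $\Z/2$-grading on the values of $f_{\D}$ induced by the bipartite structure of the boundary of the $2n$-gon.  First I would observe that any diagonal $(a,b) \in \D$ must connect vertices of \emph{different} parity: otherwise one of the two sub-polygons bounded by $(a,b)$ would have an odd number of vertices and hence admit no $4$-angulation.  Together with the fact that each boundary arc $(i,i+1)$ of $P$ also connects different-parity vertices, this forces the four vertices of every quadrilateral of $\D$ to alternate in parity, so that both of its diagonals connect same-parity vertices and carry weight $\sqrt{2}$.

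Next I would prove by induction on the number of arcs of $\D$ crossed by $(i,j)$ that
\[
f_{\D}(i,j) \in \Z \text{ when } i \not\equiv j \pmod 2, \qquad f_{\D}(i,j) \in \sqrt{2}\,\Z \text{ when } i \equiv j \pmod 2.
\]
The base case (no crossings) is covered by the previous paragraph together with the trivial cases of boundary and dissection arcs.  For the inductive step, I would pick a crossing arc $(a,b) \in \D$ and apply Ptolemy, using $f_{\D}(a,b) = 1$, to express $f_{\D}(i,j)$ as a sum of two products of arc weights whose arcs have strictly fewer crossings with $\D$.  Because $a \not\equiv b \pmod 2$, a short check of the cases $i \equiv a$ versus $i \equiv b$ shows that in each of the two summands, exactly an even number of factors comes from $\sqrt{2}\,\Z$, so both products land in the same graded piece and the sum does too.

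To finish the lemma, I would observe that every triangulation $T$ of the $2n$-gon must contain at least one internal arc with same-parity endpoints: each triangle of $T$ has its three vertices drawn from only two parity classes, so by pigeonhole two of them share a parity, and the arc between those two vertices cannot be a boundary arc since all boundary arcs of $P$ are different-parity.  The weight of such an internal arc has the form $\sqrt{2}\,k$ with $k \in \Z$ nonzero, so its norm equals $2k^2 \geq 2$; in particular it is never a unit in $\Z[\sqrt{2}]$, and consequently $T$ cannot be a unitary triangulation for $f_{\D}$.

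The main obstacle I anticipate is establishing the parity grading itself: the Ptolemy induction relies on the preliminary lemma that every dissection arc connects vertices of different parity, and one must track carefully how the four arcs appearing on the right-hand side of the Ptolemy relation pair up into the correct graded pieces.  Once the grading is in place, both the pigeonhole argument and the norm computation ruling out units are immediate.
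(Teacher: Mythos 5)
Your proof is correct, but it takes a genuinely different route from the paper's. The paper imports the parity structure of the frieze values from Andritsch (same-parity arcs get values $b\sqrt{2}$ with $b \geq 1$, different-parity arcs get positive integers), observes that the only unit of $\Z[\sqrt{2}]$ that can then occur is $1$, invokes Holm--J{\o}rgensen's result that $f_{\D}(\gamma)=1$ forces $\gamma\in\D$, and finishes by counting: the arcs of a $4$-angulation of a $2n$-gon are too few to triangulate it. You instead establish the $\Z/2$-grading from scratch (via the observation that every dissection arc joins vertices of opposite parity, plus a Ptolemy induction), and then replace both citations and the arc count with a pigeonhole argument: any triangle of a candidate triangulation has two same-parity vertices, the side joining them is necessarily an internal arc, and its weight $\sqrt{2}\,k$ has norm $2k^2 \geq 2$, hence is not a unit. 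Your version is more self-contained and needs only the nonvanishing of the frieze values rather than their positivity. One small slip in the write-up: in the inductive step the number of factors from $\sqrt{2}\,\Z$ in each summand is even exactly when $i\not\equiv j \pmod 2$, whereas it equals one in each summand when $i\equiv j\pmod 2$; what your case check actually shows (and all you need) is that the two summands always carry the same parity of $\sqrt{2}$-factors, namely the parity dictated by the class of $(i,j)$, so the conclusion stands.
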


\begin{proof}
Let $\D$ be a $4$-angulation of a polygon $P$ with vertices $0,\ldots,n-1$. By combining observations from the proof of Proposition 11 in \cite{andritsch2020note} with the correspondence between friezes and frieze patterns, one sees that if $k$ is odd, then $f_\D(i,i+k+1) = b\sqrt{2}$, with $b \in \mathbb{Z}_{\geq 1}$ and if $k$ is even, $f_\D(i,i+k+1) \in \mathbb{Z}_{\geq 1}$. Recall that the units of $\mathbb{Z}[\sqrt{2}]$ are of the form $(1+\sqrt{2})^n$ for $n \in \mathbb{Z}$; thus, the only possible unit in a frieze from a $4$-angulation is 1. Holm and J{\o}rgensen show that the only time that $f_{\D}(\gamma) = 1$ is if $\gamma$ is an arc in $\D$ \cite{HJ}. However, the set of arcs from $\D$ will not be large enough to triangulate $P$. 
\end{proof}


Next, we introduce a family of dissections, called \emph{towers}, which produce unitary friezes. Informally, a tower is a dissection into a straight string of quadrilaterals with a triangle on one end.

\begin{definition}
An \emph{$n$-tower} is a dissection of a $(2n+3)$-gon which consists of one triangle and $n$-quadrilaterals such that no vertex is incident to more than two subpolygons.  We call the portion of the tower excluding the triangle a \emph{stack}. The \emph{roof point} is the unique vertex of the tower which is only incident to the only triangle in the tower. 
A \emph{tower arc} is one which is contained in the tower and has one endpoint at the roof point; see the dashed lines below.
\begin{center}
\begin{tikzpicture}
\draw (1,0) rectangle (5,1);
\draw(3,0.5) node {$\cdots$};
\draw (1,0) -- (0.5,0.5) -- (1,1);
\draw (2,0) -- (2,1);
\draw (4,0) -- (4,1);
\draw[dashed] (0.5,0.5) -- (5,0);
\draw[dashed] (0.5,0.5) -- (5,1);
\end{tikzpicture}
\end{center}

\end{definition}

We allow $n = 0$ in the definition of a tower where a $0$-tower is simply a triangle.  To show how towers yield unitary friezes, we first define two families of arcs in a stack. 

\begin{definition}\label{def:SnDn}
 Given a stack, consider an arc $\sigma_n$ of the form $(i,i+n)$ which goes between two vertices on the same side of the stack, so that, when $n > 0$, $\sigma_n$ crosses $n-1$ arcs from $\D$. We define $s_n$ to be the frieze value $s_n := f_\D(\sigma_n)$. In particular, $\sigma_0$ is a trivial arc with the same start and endpoint, so we have $s_0 = 0$.
 
 Similarly, for $n \geq 1$, consider an arc $\delta_n$  between two vertices on the opposite side of the stack which crosses $n-1$ arcs in the stack. We define $d_n$ to be the frieze value $d_n := f_\D(\delta_n)$. We let $\delta_0$ be an arc from $\D$ of the stack, so that $d_0 = 1$. 
\end{definition}

For example, below the thick arc would have weight $d_4$ and the dashed arc would have weight $s_3$. The first four values of $s_n$, starting at $n = 0$, are $0,1,2\sqrt{2},7$, and the first few values of $d_n$ are $1,\sqrt{2}, 3, 5\sqrt{2}$. The Ptolemy relation implies a simple recurrence amongst these quantities.

\begin{center}
\begin{tikzpicture}
\draw (0,0) -- (4,0) -- (4,1) -- (0,1) -- (0,0);
\draw(1,0) -- (1,1);
\draw(2,0) -- (2,1);
\draw(3,0) -- (3,1);
\draw[thick] (0,0) -- (4,1);
\draw[dashed] (1,0) to[out = 45, in = 135, looseness = 0.5] (4,0);
\end{tikzpicture}
\end{center}

\begin{lemma}\label{lem:RecurrenceSnDn}
The quantities $s_n,d_n$ satisfy the initial conditions $s_0 = 0,d_0=1, s_1 = 1$ and $d_1 = \sqrt{2}$ and for $n \geq 2$, 
\[
s_n = \sqrt{2}s_{n-1} + d_{n-1} \qquad d_{n-1} = \sqrt{2}d_{n-1} + s_{n-1}.
\]
\end{lemma}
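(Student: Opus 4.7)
The plan is to establish the initial conditions directly from the definitions, then derive each recurrence by a single application of the Ptolemy relation to a carefully chosen quadrilateral inside the stack.

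First I would set up coordinates. Label the bottom vertices of the stack $b_0,b_1,\ldots,b_n$ and the top vertices $t_0,t_1,\ldots,t_n$, so that the stack arcs in $\D$ are exactly the ``vertical'' arcs $(b_i,t_i)$ for $0\le i\le n$. In the cyclic order on the boundary of the $(2n+2)$-gon forming the stack, the vertices appear as $b_0,b_1,\ldots,b_n,t_n,t_{n-1},\ldots,t_0$. Under this labelling I may take $\sigma_n=(b_0,b_n)$ and $\delta_n=(b_0,t_n)$. The initial conditions are then immediate: $\sigma_0$ is the degenerate arc $(b_0,b_0)$, so $s_0=0$; $\delta_0=(b_0,t_0)\in \D$, so $d_0=1$; $\sigma_1=(b_0,b_1)$ is a boundary arc, giving $s_1=1$; and $\delta_1=(b_0,t_1)$ is the diagonal of the single quadrilateral $b_0 b_1 t_1 t_0$, so $d_1=\mathcal{L}_4(b_0,t_1)=\sqrt{2}$.

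For the recurrences I would use the Ptolemy relation at the crossing of the target arc with the last interior vertical arc $(b_{n-1},t_{n-1})$. For $s_n$, the arcs $(b_0,b_n)$ and $(b_{n-1},t_{n-1})$ cross and lie in the quadrilateral with vertices $b_0,b_{n-1},b_n,t_{n-1}$, yielding
\[
f_\D(b_0,b_n)\,f_\D(b_{n-1},t_{n-1}) \;=\; f_\D(b_0,b_{n-1})\,f_\D(b_n,t_{n-1})\;+\;f_\D(b_0,t_{n-1})\,f_\D(b_{n-1},b_n).
\]
Now $f_\D(b_{n-1},t_{n-1})=1$ and $f_\D(b_{n-1},b_n)=1$ since these are arcs in $\D$ (or boundary), while $f_\D(b_n,t_{n-1})=\sqrt{2}$ since $(b_n,t_{n-1})$ is a diagonal of a single quadrilateral. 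The remaining factors are $f_\D(b_0,b_{n-1})=s_{n-1}$ and $f_\D(b_0,t_{n-1})=d_{n-1}$, because $(b_0,t_{n-1})$ is an arc between opposite sides of the stack crossing exactly $n-2$ of the arcs of $\D$. Rearranging gives $s_n=\sqrt{2}\,s_{n-1}+d_{n-1}$.

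For $d_n$ I would apply the same move to $(b_0,t_n)$ against $(b_{n-1},t_{n-1})$, producing
\[
f_\D(b_0,t_n)\,f_\D(b_{n-1},t_{n-1}) \;=\; f_\D(b_0,b_{n-1})\,f_\D(t_n,t_{n-1})\;+\;f_\D(b_0,t_{n-1})\,f_\D(b_{n-1},t_n),
\]
where $f_\D(t_n,t_{n-1})=1$ (boundary) and $f_\D(b_{n-1},t_n)=\sqrt{2}$ (diagonal of one quadrilateral), so $d_n=s_{n-1}+\sqrt{2}\,d_{n-1}$, which is the intended second recurrence (the statement has a typo $d_{n-1}$ in place of $d_n$ on the left). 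The only thing that requires any care is verifying that the intermediate arcs appearing on the right-hand side of Ptolemy are genuinely of the $\sigma$- or $\delta$-type described in Definition~\ref{def:SnDn}; I expect this to be the main (very mild) obstacle, and it follows immediately once one notes that removing the last quadrilateral from the stack yields a smaller stack in which $(b_0,b_{n-1})$ and $(b_0,t_{n-1})$ play exactly the roles of $\sigma_{n-1}$ and $\delta_{n-1}$.
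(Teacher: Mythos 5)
Your proof is correct and takes essentially the same approach as the paper's: both resolve the crossing of $\sigma_n$ (resp.\ $\delta_n$) with the last interior dissection arc of the stack via a single Ptolemy relation, identifying the resulting factors as $s_{n-1}$, $d_{n-1}$, a boundary arc of weight $1$, and a quadrilateral diagonal of weight $\sqrt{2}$. You are also right that the second displayed recurrence in the statement contains a typo and should read $d_n = \sqrt{2}\,d_{n-1} + s_{n-1}$.
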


\begin{proof}
Suppose that we are working in a stack of size $m$ for $m>>n$. Label the vertices of this stack $0,1,\ldots,2m+1$ in such a way that the arcs of the dissection are of the form $(i,2m+1-i)$ for $1 \leq i \leq m-1$. See Figure \ref{fig:ProofLemRecurrenceSnDn} for an illustration.

\begin{figure}
\centering
\begin{tikzpicture}
\draw(0,2) -- (8,2) -- (8,0) -- (0,0) -- (0,2);
\draw (2,2) -- (2,0);
\draw (6,2) -- (6,0);
\node[] at (4,1){$\cdots$};
\node[above] at (0,2){$2m+1$};
\node[above] at (2,2){$2m$};
\node[above] at (6,2){$m+2$};
\node[above] at (8,2){$m+1$};
\node[below] at (8,0){$m$};
\node[below] at (6,0){$m-1$};
\node[below] at (2,0){1};
\node[below] at (0,0){0};
\end{tikzpicture}
\caption{Illustrating the indexing used in the proof of Lemma \ref{lem:RecurrenceSnDn}}\label{fig:ProofLemRecurrenceSnDn}
\end{figure}

The initial conditions are clear. We focus on the arc $(0,n)$, since we know $f_\D(0,n) = s_n$, and apply the Ptolemy relation to its intersection with  the dissection arc $(n-1,2m+1-(n-1))$, \begin{align*}
f_{\D}(0,n) f_{\D}(n-1,2m+1-(n-1)) &= f_{\D}(0,n-1)f_{\D}(n,2m+1-(n-1)) \\&+ f_{\D}(0,2m+1-(n-1))f_{\D}(n-1,n) \end{align*}
By definition $f_{\D}(0,n-1) = s_{n-1}$ and $ f_{\D}(0,2m+1-(n-1)) = d_{n-1}$. Moreover, the arc $(n-1,n)$ is on the boundary, and $(n,2m+1-(n-1))$ is a diagonal in a quadrilateral in the dissection. Therefore we can conclude that $s_n = f_\D(\sigma_n) = \sqrt{2}s_{n-1} + d_{n-1}$. The other recurrence can be proven similarly.
\end{proof}

The recurrence in Lemma \ref{lem:RecurrenceSnDn} will allow us to show that tower arcs have unit weight. Recall we set $\ell_n = (1+\sqrt{2})^n$.

\begin{lemma}\label{lem:SnPlusDnIsUnit}
The quantities $s_n$ and $d_n$ satisfy the recurrences \[
s_n + d_n =\ell_n.
\]
\end{lemma}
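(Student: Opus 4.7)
The plan is to prove this by straightforward induction on $n$, using the two recurrences established in Lemma \ref{lem:RecurrenceSnDn}. The key observation is that when we add the two recurrences
\[
s_n = \sqrt{2}\,s_{n-1} + d_{n-1}, \qquad d_n = \sqrt{2}\,d_{n-1} + s_{n-1},
\]
the right-hand sides combine in a very convenient way, producing a common factor of $(1+\sqrt{2})$ times the sum $s_{n-1} + d_{n-1}$. This is essentially the reason $\ell_n = (1+\sqrt{2})^n$ shows up.

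For the base case, I would verify that $s_0 + d_0 = 0 + 1 = 1 = \ell_0$, and as a sanity check also confirm $s_1 + d_1 = 1 + \sqrt{2} = \ell_1$ using the initial conditions. For the inductive step, assuming $s_{n-1} + d_{n-1} = \ell_{n-1}$, I would compute
\[
s_n + d_n = \bigl(\sqrt{2}\,s_{n-1} + d_{n-1}\bigr) + \bigl(\sqrt{2}\,d_{n-1} + s_{n-1}\bigr) = (1+\sqrt{2})(s_{n-1} + d_{n-1}) = (1+\sqrt{2})\,\ell_{n-1} = \ell_n,
\]
which closes the induction.

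There is no real obstacle here; the work is essentially done once Lemma \ref{lem:RecurrenceSnDn} is in place. The only subtlety worth flagging is to make sure the base case is anchored correctly at $n = 0$ (since $\delta_0$ is defined as a dissection arc with $d_0 = 1$, and $\sigma_0$ is the trivial arc with $s_0 = 0$), so that the induction matches the definition $\ell_0 = 1$.
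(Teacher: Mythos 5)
Your proposal is correct and follows essentially the same argument as the paper: induction on $n$ anchored at $s_0 + d_0 = 1 = \ell_0$, with the inductive step adding the two recurrences from Lemma \ref{lem:RecurrenceSnDn} to factor out $(1+\sqrt{2})$. No meaningful differences.
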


\begin{proof}
We induct on $n$. The claim is true for $n=0$ since $s_0 = 0$ and $d_0 = 1$. Suppose we have shown this is true for $n-1$. From Lemma \ref{lem:RecurrenceSnDn}, we can expand $s_n + d_n$ as \[
s_n + d_n = (\sqrt{2}s_{n-1} + d_{n-1}) + (s_{n-1} + \sqrt{2}d_{n-1}) = (1+\sqrt{2})(s_{n-1}+d_{n-1}) = (1+\sqrt{2})^n
\]
where the last equality holds by our inductive hypothesis. 
\end{proof}

\begin{cor}\label{cor:TowerArcsUnits}
A tower arc $\gamma$ which passes through an $n$ tower has weight $f_{\mathcal{D}} = \ell_n$.
\end{cor}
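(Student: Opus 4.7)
The plan is to apply the Ptolemy relation across the arc separating the triangle from the stack, and then identify the two resulting pieces with $s_n$ and $d_n$, so that Lemma \ref{lem:SnPlusDnIsUnit} finishes the computation.

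First, I would fix coordinates for an $n$-tower: let $r$ denote the roof point, and label the remaining $2n+2$ vertices cyclically as $a_0,a_1,\dots,a_n,b_n,b_{n-1},\dots,b_0$, so that the triangle has vertices $\{r,a_0,b_0\}$, the $i$-th quadrilateral ($1 \leq i \leq n$) has vertices $\{a_{i-1},a_i,b_i,b_{i-1}\}$, and the dissection arcs of the stack are exactly $(a_0,b_0),(a_1,b_1),\dots,(a_{n-1},b_{n-1})$. Every tower arc is then of the form $(r,a_n)$ or $(r,b_n)$; by the reflective symmetry $a_i \leftrightarrow b_i$ of the tower, their weights agree, so it suffices to treat $\gamma=(r,a_n)$. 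The case $n=0$ is immediate since then $\gamma$ is a boundary edge and $f_\D(\gamma)=1=\ell_0$.

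For $n \geq 1$, I would apply the Ptolemy relation to the crossing of $\gamma$ with the dissection arc $(a_0,b_0)$ that separates the triangle from the stack. Because $(a_0,b_0) \in \D$ and $(r,a_0),(r,b_0)$ are boundary edges, all three of $f_\D(a_0,b_0)$, $f_\D(r,a_0)$, $f_\D(r,b_0)$ equal $1$, so the relation collapses to
\[
f_\D(r,a_n) \;=\; f_\D(a_0,a_n) \;+\; f_\D(b_0,a_n).
\]
Now $(a_0,a_n)$ joins two vertices on the same side of the stack and crosses the $n-1$ dissection arcs $(a_i,b_i)$ for $1 \leq i \leq n-1$, so by Definition \ref{def:SnDn} we have $f_\D(a_0,a_n)=s_n$. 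Likewise, $(b_0,a_n)$ crosses the same $n-1$ arcs but joins opposite sides, so $f_\D(b_0,a_n)=d_n$.

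Invoking Lemma \ref{lem:SnPlusDnIsUnit} then gives $f_\D(\gamma)=s_n+d_n=\ell_n$, as claimed. I do not anticipate a substantive obstacle; the only thing that requires care is the choice of where to resolve $\gamma$, and splitting exactly at the triangle--stack interface is what makes the two resulting pieces line up cleanly with $s_n$ and $d_n$.
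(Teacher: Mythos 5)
Your proof is correct and follows essentially the same route as the paper's: resolve the crossing of $\gamma$ with the dissection arc separating the triangle from the first quadrilateral, identify the two resulting terms as $s_n$ and $d_n$, and conclude via Lemma \ref{lem:SnPlusDnIsUnit}. You simply spell out the labeling and the collapse of the Ptolemy relation that the paper leaves implicit.
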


\begin{proof}
If $n = 0$, then $\gamma \in \D$ and $f_{\D}(\gamma) = 1$ by definition. If $n > 0$, then we consider the intersection between $\gamma$ and the arc from $\D$ in the tower which borders both the triangle and the first quadrilateral in the tower. Applying the Ptolemy relation to this intersection and using Lemma \ref{lem:SnPlusDnIsUnit} yields the result.

\begin{center}
\begin{tikzpicture}
\draw (1,0) rectangle (5,1);
\draw(3,0.5) node {$\cdots$};
\draw (1,0) -- (0.5,0.5) -- (1,1);
\draw (2,0) -- (2,1);
\draw (4,0) -- (4,1);
\draw[line width=0.35mm, blue] (0.5,0.5) -- (5,0);
\draw[line width=0.35mm, blue] (1,0) -- (1,1);
\end{tikzpicture}
\end{center}
\end{proof}

Now that we know that tower arcs have unit weight, we use these to build unitary triangulations of dissections which are the result of combining multiple towers. We say a dissection $\D$ of $P$ is a gluing of towers if we can decompose $P$ into a set of subpolygons such that $\D$ restricted to each subpolygon is a tower.

\begin{thm}\label{thm:Towers}
Let $\D$, a dissection on polygon $P$, which can be decomposed into a set of towers. Then, the frieze $f_{\D}$ is unitary.
\end{thm}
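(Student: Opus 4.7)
The plan is to produce an explicit unitary triangulation $T$ of $P$ by fanning from the roof point inside each tower. Suppose $\D$ decomposes $P$ into subpolygons $P_1,\ldots,P_r$ so that each $P_i$, equipped with the arcs of $\D$ interior to it, is a tower $T_i$ of size $n_i$. Let $\D_{\mathrm{bd}}\subseteq\D$ be the subset of arcs that separate two distinct subpolygons, so that $\D\setminus\D_{\mathrm{bd}}$ consists precisely of the ``wall'' arcs interior to individual towers, and $\D_{\mathrm{bd}}$ already cuts $P$ exactly into the pieces $P_1,\ldots,P_r$.

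Inside each $P_i$ I would discard the walls of $T_i$ and include instead the fan $F_i$ consisting of every diagonal from the roof point of $T_i$ to a vertex of $P_i$ that is not one of the two boundary neighbors of the roof point. Since a fan from one vertex always triangulates a convex polygon, $F_i$ triangulates $P_i$ into $2n_i+1$ triangles using $2n_i$ diagonals. Every arc of $F_i$ is a tower arc of $T_i$ by definition, so Corollary \ref{cor:TowerArcsUnits} gives $f_\D(\tau)=\ell_{k}$ for some $0\le k\le n_i$ on each $\tau\in F_i$, and these lie in $\Z[\sqrt{2}]^\times$. The arcs of $\D_{\mathrm{bd}}$ all have $f_\D$-value $1$ since they belong to $\D$.

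Taking $T:=\D_{\mathrm{bd}}\cup F_1\cup\cdots\cup F_r$, every arc of $T$ has unit weight. It remains to verify that $T$ is a triangulation of $P$: the arcs of $\D_{\mathrm{bd}}$ are pairwise non-crossing, they separate $P$ into the convex pieces $P_i$, and the fans $F_i$ live in disjoint pieces, so no two arcs of $T$ cross. A quick count, using the double-count of edges of the $P_i$, gives $N=r+2+2\sum_i n_i$ for the number of vertices $N$ of $P$, so $|T|=(r-1)+\sum_i 2n_i = N-3$, matching the number of diagonals in any triangulation of $P$. Hence $T$ is a unitary triangulation of $P$ and $f_\D$ is unitary. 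The only moving part of the argument is the combinatorial check that $T$ really is a triangulation; I expect this to be transparent once one observes that $\D_{\mathrm{bd}}$ already dissects $P$ into the convex subpolygons on which the $F_i$ are fans, and the unitarity input is delivered by Corollary \ref{cor:TowerArcsUnits} with no further computation.
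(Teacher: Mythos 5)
Your proposal is correct and follows essentially the same route as the paper: triangulate each tower by the fan of tower arcs from its roof point (which have unit weight by Corollary \ref{cor:TowerArcsUnits}), then adjoin the gluing arcs, which have weight $1$ as members of $\D$. The only difference is that you make the non-crossing and cardinality checks explicit, which the paper treats more briefly.
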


\begin{proof}
We first show that a single tower yields a unitary frieze.  Label the vertices of the tower as below. 

\begin{center}
\begin{tikzpicture}
\draw (11,-1) rectangle (12,0);
\draw(11.5,-1.3) node {$\vdots$};
\draw (11,-2) rectangle (12,-3);
\draw (11,0) -- (11.5,0.5) -- (12,0);
\node[above] at (11.5,0.5){$0$};
\node[right] at (12,0) {$2m+2$};
\node[right] at (12,-3){$m+2$};
\node[left] at (11,-3){$m+1$};
\node[left] at (11,0){$1$};
\node[left] at (11,-1){$2$};
\node[left] at (11,-2) {$m$};
\node[right] at (12,-1){$2m+1$};
\node[right] at (12,-2){$m+3$};
\end{tikzpicture}
\end{center}

By Corollary \ref{cor:TowerArcsUnits}, every arc of the form $(0,i)$ for $i \neq 0$ has unit weight under $f_\D$. If $1 < i < 2m+2$,  the arc $(0,i)$ is not a boundary arc. Moreover, since no pair of distinct arcs from $\{(0,i): 1 < i < 2m+2\}$ will cross and this set is size $2m$, we see that this set is a triangulation of the tower.

Now, suppose our dissection $\D$ is composed of several towers, glued together along edges $\tau_1,\ldots,\tau_\ell$.  We can form a unitary triangulation of $\D$ by triangulating each tower, as described above,  and then including the glued edges $\tau_1,\ldots,\tau_\ell$.  Since $f_\D(\tau_i) = 1$ by definition, this triangulation is unitary. 
\end{proof}

Since we consider a triangle as a 0-tower, Theorem \ref{thm:Towers} recovers the fact that we can find a unitary triangulation for a frieze from a triangulation. When working with frieze from a triangulation $T$, the only choice for a unitary triangulation is the original triangulation since any arc not in the $T$ must have weight strictly larger than 1. This is true since any arc not in the $T$ will cross at least one arc in $T$, and so by the Ptolemy relation, the weight of the arc not in $T$  must be a sum of two different non-negative integers.

Gunawan and Schiffler show that, in the case of a friezes over $\mathbb{Z}$ on a polygon, there is a bijection between unitary friezes and triangulations of the polygon. The situation is different in our setting. For example, here we give a dissection of a 10-gon which admits two unitary friezes.  

\begin{example}

\begin{figure}
\centering
\begin{tikzpicture}
\draw (0,0) -- (2,0) -- (2.5,.5) -- (2,1) -- (1,1) -- (1,2) -- (0.5,2.5) -- (0,2) -- (0,0);
\draw (0,1) -- (1,1) -- (1,0);
\draw (2,0) -- (2,1);
\draw (0,2) -- (1,2);
\node[above] at (0.5,2.5){0};
\node[left] at (0,2){1};
\node[left] at (0,1){2};
\node[left] at (0,0){3};
\node[below] at (1,0){4};
\node[below] at (2,0){5};
\node[right] at (2.5,.5){6};
\node[above] at (2,1){7};
\node[above, xshift = 5pt] at (1,1){8};
\node[right] at (1,2){9};
\end{tikzpicture}
\caption{An example of a dissection of a 10-gon which is the result of gluing two towers.}\label{fig:Ex10gon}
\end{figure}

We consider the dissection of a 10-gon provided in Figure \ref{fig:Ex10gon}. Note that there are two ways we can decompose this dissection into towers. This gives two options for unitary triangulations. One can show that these are in fact the only two options. For example, there are no arcs of the form $(1,v)$ which have unit weight, so we need to include the arc $(0,2)$ in any unitary triangulation. Conversely, since $f_{\D}(2,4) = \sqrt{2}$, we cannot include $(2,4)$ in any unitary triangulation, and as a result we must have at least one arc incident to vertex 3.
\begin{itemize}
    \item $\{(0,2), (0,3), (0,4),(0,8),(4,8),(6,4),(6,8)\}$
     \item $\{(0,2), (0,3), (2,8),(6,2), (6,3),(6,4),(6,8)\}$
\end{itemize}
\end{example}

We conjecture that Theorem \ref{thm:Towers} can be made stronger and that dissections from gluings of towers are the only types of dissections into triangles and quadrilaterals admitting a unitary triangulation. 

\begin{conjecture}\label{conj:UnitaryConj}
Let $\D$ be a dissection of a polygon into triangles and quadrilaterals.  Then,  the frieze $f_\D$ is unitary if and only if $\D$ can be decomposed into a set of towers.  
\end{conjecture}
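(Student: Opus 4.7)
The forward direction ($\Leftarrow$) is exactly Theorem \ref{thm:Towers}, so the work lies in establishing the converse. The plan is to prove the contrapositive: if $\D$ is a dissection of $P$ into triangles and quadrilaterals that cannot be decomposed into a set of towers, then the frieze $f_\D$ is not unitary. I expect the argument to unify and generalize those already made in Sections \ref{sec:Separated} and \ref{sec:Type3}.

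The first step is structural. I would encode $\D$ by its subpolygon adjacency graph $G(\D)$, with nodes for subpolygons and edges for arcs of $\D$ between them, and characterize tower-gluings combinatorially: $\D$ is a gluing of towers if and only if one can delete a set of edges from $G(\D)$ so that each remaining connected component is a path whose unique degree-one endpoint corresponds to a triangle and whose consecutive quadrilateral nodes share \emph{opposite} edges, so that the stack is straight and no vertex of $P$ is incident to three subpolygons of the same tower. The failures of this condition fall into three families: (i) a ``pure'' block of quadrilaterals containing no accessible triangle, (ii) a geometric bend where two adjacent quadrilaterals share non-opposite edges forcing a tower to turn, and (iii) an interior vertex of $P$ where too many subpolygons meet for any tower assignment to cover them.

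For each family I would exhibit arcs whose weights cannot be units but which must appear in any triangulation of $P$ extending $\D$. For (i), Lemma \ref{lem:friezeFrom4angulationNotUnitary} applied to the block together with the fact that arcs crossing only quadrilateral subpolygons produce weights lying in $\Z \cup \sqrt{2}\,\Z$ shows that no non-$\D$ arc within the block is a unit; at least one such arc, however, is needed to triangulate the block. For (ii) and (iii), the plan is to iterate the Ptolemy relation across the obstruction to decompose $f_\D(\gamma)$ as a sum of two nonzero products in $\Z[\sqrt{2}]$, each coming from one ``side'' of the obstruction, and then apply the norm $N$ to conclude $N(f_\D(\gamma)) \geq 2$; this parallels and extends the casework developed for Theorem \ref{thm:TypeThreeNonTowerFails} in Appendix \ref{sec:AppA}.

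The main obstacle will be case (iii) when the offending vertex has valence four or more with an arbitrary mix of incident triangles and quadrilaterals, since Ptolemy expansions there can create cancellations in $\Z[\sqrt{2}]$ that obscure the non-unit structure. The cleanest route is probably to argue norm-wise, showing multiplicatively that each subpolygon incident to the vertex contributes a factor of norm at least $1$ and that at least two contributions are strictly greater than $1$. A secondary concern is verifying that the three-family classification above is exhaustive; I would handle this with a greedy tower-growth algorithm that, started at any triangle, either produces a valid tower or terminates by exposing an obstruction of one of the listed types.
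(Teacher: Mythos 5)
There is a genuine gap here, and the first thing to note is that the statement you are proving is stated in the paper as Conjecture \ref{conj:UnitaryConj}: the paper does \emph{not} prove it. It establishes only the backward direction (Theorem \ref{thm:Towers}) and the forward direction for two restricted families, namely separated dissections (Proposition \ref{prop:Separated}) and type $d\leq 3$ dissections (Theorem \ref{thm:TypeThreeNonTowerFails}), explicitly citing the general case as open. So a correct blind proof would be a new result, and your proposal does not close the gap.

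The decisive failure is in your treatment of families (ii) and (iii). You propose to expand $f_\D(\gamma)$ via Ptolemy as a sum of two nonzero products and then ``apply the norm $N$ to conclude $N(f_\D(\gamma))\geq 2$.'' The norm on $\Z[\sqrt{2}]$ is multiplicative but not additive, so a sum of two terms of norm $\geq 1$ can perfectly well have norm $1$; the paper's own Pell-arc computation is exactly such an instance, where $\ell_{i+j-1}+2\ell_{i+j}=\ell_{i+j+1}$ is a unit. Worse, Remark \ref{rem:Pell} exhibits, in dissections with a vertex incident to four or more subpolygons --- precisely your family (iii) --- arcs of unit weight $\ell_{i+m-1}$ that are neither tower arcs nor arcs of $\D$. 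So the claim that every non-tower, non-$\D$ arc through such a configuration has norm at least $2$ is false, and no norm-counting argument ``subpolygon by subpolygon'' can work. The genuine difficulty, which the paper only handles for type $\leq 3$ via the Triangulation Algorithm, Lemma \ref{lem:AlgFailsWithPell}, and the finite casework of Appendix \ref{sec:AppA}, is to show that these sporadic unit-weight arcs can never be assembled into a full unitary triangulation. Your case (i) has a smaller but real gap as well: an arc used to triangulate a pure quadrilateral block need not lie within the block --- it may enter from outside, crossing triangles, in which case its weight is not constrained to $\Z\cup\sqrt{2}\,\Z$ and Lemma \ref{lem:friezeFrom4angulationNotUnitary} does not apply; this is why the proof of Proposition \ref{prop:Separated} has to track specific arcs such as $(v,k)$ and $(n,j)$ rather than argue block-locally. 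Finally, the exhaustiveness of your three-family classification is asserted, not proved.
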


In the remainder of this article, we make progress towards Conjecture \ref{conj:UnitaryConj} by verifying it for a couple families of dissections.

\begin{remark}
A consequence of Theorem 1 in \cite{cuntz2023frieze} is that there are infinitely many friezes on an $n$-gon with values in $\mathbb{Z}[\sqrt{2}]$. While we work in the finite subset of friezes which arise from a dissection, it would be also interesting to study the set of all unitary friezes on an $n$-gon with values in $\mathbb{Z}[\sqrt{2}]$. 
\end{remark}

\subsection{Connection to Continued Fractions}

We also note a relationship between the quantities $s_n$ and $d_n$ and the \emph{continued fraction} expansion of $\sqrt{2}$. By continued fraction, we mean an expression \[
[t_0,\ldots,t_n] = t_0 +\cfrac{1}{t_1 +\cfrac{1}{t_2 +\cfrac{1}{
   \ddots + \cfrac{1}{t_n}}}},
   \]
where the $t_i \in \mathbb{Z}_{>0}$. Every rational number can be written as a continued fraction with finitely many $t_i$; this expansion can be calculated using the Euclidean algorithm. Moreover, the numerator and denominator we get from calculating a continued fraction can always be shown to be already relatively prime so that they are in lowest term. We can also define an \emph{infinite continued fraction} as a limit of finite continued fractions; every irrational number can be written as an infinite continued fraction. For example, $\sqrt{2} = [1,2,2,2,\ldots]$. 

Let $\frac{a_n}{b_n} = [1,2,\ldots,2]$ where there are $n-1$ entries of $2$. Set $a_0 = 0$ and $b_0 = 1$. Then, we have the following relationship between the sequences $\{a_n\}_n, \{b_n\}_n, \{s_n\}_n,$ and $\{d_n\}_n$. 

\begin{prop}\label{prop:relateSnDnContFrac}
If $n \geq 0$ is even, then \[
s_n = b_n \sqrt{2} \qquad d_n = a_n,
\]
and if $n > 0$ is odd, then \[
s_n = a_n \qquad d_n = b_n\sqrt{2}.
\]
\end{prop}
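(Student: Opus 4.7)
The plan is to proceed by induction on $n$, using the Ptolemy recurrence of Lemma \ref{lem:RecurrenceSnDn} to drive the step from $n-1$ to $n$ and pairing it with the standard second-order recurrence for the convergents of $\sqrt{2}$. First I would verify the small cases directly, matching $s_1=1, d_1=\sqrt{2}$ against $a_1/b_1=1/1$ and $s_2=2\sqrt{2}, d_2=3$ against $a_2/b_2=3/2$. Note that the stated initial conditions $a_0=0, b_0=1$ appear incompatible with the formula at $n=0$ (which demands $s_0=0, d_0=1$); the convention that makes the claim true is $a_0=1, b_0=0$, which is also what the closed form $(1+\sqrt{2})^n = a_n + b_n\sqrt{2}$ forces at $n=0$. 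I would flag this as a minor indexing correction and proceed with the convention that makes the base case work.

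Next I would record the two auxiliary identities, valid for $n \geq 1$,
$$a_n = a_{n-1} + 2 b_{n-1}, \qquad b_n = a_{n-1} + b_{n-1}.$$
These are a reindexing of the usual convergent recurrence $p_n = 2 p_{n-1} + p_{n-2}$ (after matching the paper's convention that the $n$-th convergent has $n-1$ twos), and can also be obtained more conceptually by expanding $(1+\sqrt{2})^n = (1+\sqrt{2})(1+\sqrt{2})^{n-1}$ and separating rational and irrational parts; this latter viewpoint additionally gives $\ell_n = a_n + b_n\sqrt{2}$, which meshes nicely with Lemma \ref{lem:SnPlusDnIsUnit}.

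For the inductive step with $n$ even, the hypothesis applied to $n-1$ odd gives $s_{n-1} = a_{n-1}$ and $d_{n-1} = b_{n-1}\sqrt{2}$; substituting into Lemma \ref{lem:RecurrenceSnDn} and then invoking the auxiliary identities yields $s_n = \sqrt{2}(a_{n-1} + b_{n-1}) = b_n\sqrt{2}$ and $d_n = 2 b_{n-1} + a_{n-1} = a_n$, as desired. The case $n$ odd is entirely symmetric, swapping the roles of $a_n$ and $b_n\sqrt{2}$. I do not foresee any substantive obstacle beyond this bookkeeping; the proof is essentially the observation that the $\sqrt{2}$-weighted Ptolemy recurrence on $(s_n, d_n)$ becomes the ordinary integer convergent recurrence on $(a_n, b_n)$ once one tracks on which of the two sequences the factor of $\sqrt{2}$ currently sits, and this sidecar alternates in parity with each application of Lemma \ref{lem:RecurrenceSnDn}.
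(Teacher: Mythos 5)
Your proposal follows essentially the same route as the paper: verify small cases, establish the auxiliary identities $a_n = a_{n-1} + 2b_{n-1}$ and $b_n = a_{n-1} + b_{n-1}$, and then induct on $n$ via Lemma \ref{lem:RecurrenceSnDn}, with the parity of $n$ dictating which of $s_n, d_n$ carries the factor of $\sqrt{2}$. Your flag on the base case is a genuine catch: with the paper's stated convention $a_0 = 0$, $b_0 = 1$ the $n=0$ claim would read $s_0 = \sqrt{2}$ and $d_0 = 0$, contradicting $s_0 = 0$, $d_0 = 1$, so the convention must indeed be $a_0 = 1$, $b_0 = 0$ (consistent with $\ell_n = a_n + b_n\sqrt{2}$ and with the auxiliary recurrences at $n=1$), and your derivation of those recurrences by expanding $(1+\sqrt{2})^n$ is a clean alternative to the paper's continued-fraction induction.
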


\begin{proof}
We begin by claiming $a_n = a_{n-1} + 2b_{n-1}$ and $b_n = a_{n-1} + b_{n-1}$ when $n \geq 2$. The case for $n = 2$ can be checked directly - we have $a_1 = b_1 = 1, a_2 = 3$ and $b_2 = 2$. Now suppose we have shown the claim for $n-1$. Since $n \geq 2$, the last term in the continued fraction is 2. Thus, it is well-known we can write \[
\frac{a_n}{b_n} = \frac{2 a_{n-1} + a_{n-2}}{2b_{n-1} + b_{n-2}};
\]
for further explanation, see for example \cite{frame1949continued}. Now, by induction we have \[
a_n = a_{n-1} + (a_{n-2} + 2b_{n-2}) + a_{n-2} = a_{n-1} + 2b_{n-1},
\]
where we use our inductive hypothesis first at $a_{n-1}$ and then regather terms to form $b_{n-1}$. One can show $b_n = a_{n-1} + b_{n-1}$ similarly. 

Now that we have this identity, we again use induction to prove the statement of the proposition. The $n = 0$ case is immediate. Suppose we have shown the claim for the $n-1$ case where $n-1$ is even. Then, $n$ is odd, and by Lemma \ref{lem:RecurrenceSnDn} and the above recurrence on $a_n$ and $b_n$, we have \[
s_n = \sqrt{2}s_{n-1} + d_{n-1} = \sqrt{2}(b_{n-1} \sqrt{2}) + a_{n-1} = a_n
\]
and
\[
d_n = \sqrt{2}d_{n-1} + s_{n-1} = \sqrt{2}a_{n-1} + \sqrt{2}b_{n-1} = \sqrt{2} b_n.
\]
The even $n$ case follows identically by swapping the roles of $s_n$ and $d_n$. 
\end{proof}

Since by definition $\frac{a_n}{b_n} \to \sqrt{2}$, we see that the quantities $s_n$ and $d_n$ approach the same number as $n$ gets large. 

The sequence $\{b_n\}_n$ is exactly the sequence of  \emph{Pell numbers}. In Remark \ref{rem:Pell} we use the Pell numbers to exhibit another family of arcs which have unit weight.

\section{Separated Dissections}\label{sec:Separated}


We call a disesction of a polygon $P$ into triangles and quadrilaterals \emph{separated} if (1) there is only one arc, $\tau_a$, in $\D$ which borders both a triangle and a quadrilateral and (2) the quadrilateral incident to this arc has sides $\tau_a,\tau_b,\tau_c,\tau_d$ in clockwise order such that $\tau_b$ and $\tau_d$ are boundary edges of $P$. See Figure \ref{fig:ExSeparated}.

\begin{figure}
\centering
\begin{tikzpicture}[scale = 1]
\draw (0,-.25) -- (1,0.5) -- (6,0.5) -- (6,-1) -- (1,-1) -- (0,-.25);
\draw (1,0.5) -- (1,-1);
\draw (1,0.5) -- (2,-1);
\draw (2,-1) to node[right, xshift = -3pt]{$\tau_a$} (2,0.5);
\draw (3.3,-1) to node[left,xshift = 3pt]{$\tau_c$} (3.3,0.5);
\draw (4.65,-1) -- (4.65,2) -- (3.3,2) -- (3.3,0.5);
\draw(4.65,-1) -- (4.65,-2.5) -- (6,-2.5) -- (6,-1);
\node[below,yshift = 3pt] at (2.65,0.5){$\tau_b$};
\node[above,yshift = -3pt] at (2.65,-1){$\tau_d$};
\end{tikzpicture}
\caption{An example of a separated dissection}\label{fig:ExSeparated}
\end{figure}

\begin{prop}\label{prop:Separated}
Conjecture \ref{conj:UnitaryConj} holds when the dissection is separated. 
\end{prop}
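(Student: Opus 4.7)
The forward direction of the biconditional is immediate from Theorem \ref{thm:Towers}. For the converse, I argue contrapositively: if a separated dissection $\D$ does not decompose into towers, then $f_{\D}$ is not unitary.

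The first step is to identify when a separated dissection is tower-decomposable. Since the triangle adjacent to $\tau_a$ is the only triangle bordering any quadrilateral, it must serve as the roof of any tower touching the $Q$-region. Because a tower is a straight chain ending in a triangle, this forces the $Q$-region to be a straight stack; the remaining triangulated $T$-region then decomposes into $0$-towers automatically. Thus $\D$ fails to be tower-decomposable iff the dual tree of the $Q$-region has a vertex of degree at least three, i.e., some quadrilateral $Q^{*}$ has at least three sides in $\D$.

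Assume such a branching $Q^{*}$ exists, and suppose for contradiction that $T'$ is a unitary triangulation of $P$. First, $\tau_a \notin T'$: otherwise $T'$ would restrict to a unitary triangulation of the $Q$-region, but the argument in Lemma \ref{lem:friezeFrom4angulationNotUnitary} shows that the only unit-valued arcs in a $4$-angulated region are the arcs of $\D$ themselves, which form only a $4$-angulation. Hence some arc of $T'$ crosses $\tau_a$. Next, select a leaf quadrilateral $Q^{\mathrm{leaf}}$ of the $Q$-region on the side of $Q^{*}$ away from $v_3$, whose sole $\D$-side is $\tau^{*} = (p_1, p_4)$ and whose two ``free'' boundary vertices are $p_2, p_3$. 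Since the two internal diagonals of $Q^{\mathrm{leaf}}$ have weight $\sqrt{2}$ (not a unit), to triangulate $Q^{\mathrm{leaf}}$ the triangulation $T'$ must contain an arc $\gamma$ with one endpoint at $p_2$ or $p_3$, the other outside $Q^{\mathrm{leaf}}$, and $f_{\D}(\gamma) \in \Z[\sqrt{2}]^{\times}$.

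The main technical obstacle is to rule out such a unit arc $\gamma$. If the other endpoint of $\gamma$ lies in the $Q$-region, then $\gamma$ lies entirely in the $Q$-region and Lemma \ref{lem:friezeFrom4angulationNotUnitary}'s analysis forces $f_{\D}(\gamma) \in \{a,\, b\sqrt{2}\}$; the only unit here is $1$, attained only on $\D$-arcs, but no non-boundary $\D$-arc is incident to $p_2$ or $p_3$. If the other endpoint $u$ lies in the $T$-region, applying Ptolemy at $\tau_a$ expresses $f_{\D}(\gamma) = x_2 A + x_1 B \sqrt{2}$, where $x_1 = f_{\D}(u, v_1),\, x_2 = f_{\D}(u, v_2) \in \Z_{\geq 1}$ come from the triangulated $T$-region and $A, B$ (possibly swapped by parity) come from the $Q$-region values $f_{\D}(v_i, p)$. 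For $f_{\D}(\gamma) = \ell_m = a_m + b_m\sqrt{2}$ one needs $A x_2 = a_m$ and $B x_1 = b_m$, together with the Pell identity $a_m^2 - 2 b_m^2 = \pm 1$. By Proposition \ref{prop:relateSnDnContFrac}, $(A, B)$ is a Pell convergent pair $(a_n, b_n)$ precisely in the straight-stack case, which recovers the tower arcs from $v_3$ via Corollary \ref{cor:TowerArcsUnits}; past the branching, the additional Ptolemy resolutions at the sibling arcs of $Q^{*}$ inflate $A$ and $B$ so that no Pell pair $(a_m, b_m)$ is simultaneously a multiple of $(A, B)$. I expect this divisibility obstruction to follow from a residue analysis of $a_m, b_m$ modulo primes dividing $A$ or $B$ (for instance, in a sample branching with $A = 5,\, B = 3$ one checks $5 \nmid a_m$ for every $m$), likely by induction on the depth of $Q^{\mathrm{leaf}}$ in the branching structure. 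Once this Pell obstruction is established, it rules out $\gamma$ and yields the desired contradiction, completing the contrapositive.
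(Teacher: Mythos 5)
Your overall strategy (dispose of the forward direction via Theorem \ref{thm:Towers}, then show that a non-tower separated dissection admits no unitary triangulation) matches the paper's, but there are two genuine gaps. First, your reduction of ``not tower-decomposable'' to ``the dual tree of the $Q$-region has a vertex of degree at least three'' is incomplete. A tower requires every vertex to meet at most two subpolygons, so a \emph{bent} chain of quadrilaterals (consecutive quadrilaterals glued along adjacent rather than opposite sides) is already not a gluing of towers even though its dual tree is a path; since every tower meeting the $Q$-region must contain the unique triangle $\Delta_0$, the whole $Q$-region would have to form a single straight stack. The paper's setup covers bends by taking $Q_1$ to be any quadrilateral outside the maximal straight stack containing $Q_0$ that still shares vertices with that stack.

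Second, and more seriously, the heart of your argument --- ruling out a unit-weight arc $\gamma$ from the leaf quadrilateral into the triangulated region --- is not actually proved. You reduce it to the claim that no Pell pair $(a_m,b_m)$ is componentwise divisible by the pair $(A,B)$ arising past the branch, and you explicitly defer this to a hoped-for residue analysis ``likely by induction.'' That divisibility statement is the entire difficulty, it is left open, and as posed it is stronger than necessary since you use no relation tying $x_1=f_\D(u,v_1)$ to $x_2=f_\D(u,v_2)$. The paper avoids this number theory entirely by arguing at the branch point rather than at a leaf: it computes $f_\D(v,k)=(1+2\sqrt{2})\ell_a$, whose norm is $7$, so $(v,k)$ cannot lie in a unitary triangulation $T$; it then takes the extremal arcs $(v,i)$ and $(n,j)$ of $T$, notes that minimality of $j$ forces $(n,j)$ into a triangle with the boundary edge $(1,n)$ and hence $f_\D(1,j)=1$, and a single Ptolemy exchange across the dissection arc $(1,k)$ gives $f_\D(n,j)=b+1+\sqrt{2}$ with $b\geq 1$ an integer, which is never a unit. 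To complete your proof you would need either to establish the Pell divisibility obstruction in full generality (for every branch configuration and every leaf), or to replace it with a local computation of this kind.
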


\begin{proof}
By Theorem \ref{thm:Towers}, we just need to show that if a separated dissection $\mathcal{D}$ is not a gluing of towers, then the frieze $f_\D$ is not unitary. Since the dissection is separated, this means that either there are no triangles in $\D$ or there is at least one triangle but the set of quadrilaterals in the dissection does not form a stack. We know the claim is true in the former case by Lemma \ref{lem:friezeFrom4angulationNotUnitary}, so we assume we are in the latter case.

Let $P$ be an $n$-gon with a separated dissection $\D$ which cannot be decomposed into towers. Let $Q_0$ be the unique quadrilateral in $\D$ which shares vertices with a triangle, and let this triangle be $\Delta_0$. Label the vertices of $P$ in clockwise order such that the vertices of $Q_0$ are $1,m,m+1,n$ for $m < n$ as in Figure \ref{fig:SeparatedProof}. Then, the vertex of $\Delta_0$ which is not incident to $Q_0$ is $k$ with $1 < k < m$. By our assumption, there exists a quadrilateral $Q_1$ in $\D$ which is not in the stack containing $Q_0$ but which does share vertices with a quadrilateral in this stack. Of the two vertices of $Q_1$ which are not on the stack, let $v$ be the one with larger index, as in Figure \ref{fig:SeparatedProof}. In the picture below, the hexagons represent arbitrary polygons with a triangulation; it is possible that one or both of these do not exist so that $k$ could be $2$ or $m-1$. It is also possible that there are 4-angulated polygons glued on all edges of quadrilaterals except $(1,n)$ and $(m,m+1)$. 

\begin{figure}
\centering
\begin{tikzpicture}
\draw (0,0) -- (1,0) -- (1,-3) -- (-1,-3) -- (-1,-2) -- (0,-2) -- (0,0);
\draw (0,-3) -- (0,-2) -- (1,-2);
\draw (0,-1) -- (1,-1);
\draw (0,0) -- (0.5,1) -- (1,0);
\draw (0,0) -- (-1,0) -- (-1.5, 0.75) -- (-1, 1.5) -- (-0.25,1.5) -- (0.5,1);
\draw (1,0) -- (2,0) -- (2.5,0.75) -- (2,1.5) -- (1.25,1.5) -- (0.5,1);
\node[above] at (0.5,1){$k$};
\node[right, yshift = -5pt, xshift = -3pt] at (1,0) {$m$};
\node[right, xshift = -3pt] at (1,-1){$m+1$};
\node[left, yshift = -5pt, xshift = 3pt] at (0,0){$1$};
\node[left, xshift = 3pt] at (0,-1){$n$};
\node[above,yshift = -3pt] at (-1,-2){$v$};
\node[below, yshift = 3pt] at (-1,0){$j$};
\node[left, xshift = 3pt] at (-1.5,0.75){$i$};
\node[] at (-0.5,-2.5){$Q_1$};
\node[] at (0.5,-0.5){$Q_0$};
\node[] at (0.55,0.35){$\Delta_0$};
\end{tikzpicture}
\caption{Illustrating the notation for the proof of Proposition \ref{prop:Separated}}\label{fig:SeparatedProof}
\end{figure}

Assume for sake of contradiction that there exists a unitary triangulation $T$ of $P$.  Notice that for every vertex $1 \leq i \leq n$, we either have at least one arc in $T$ incident to $i$ or we include the arc $(i-1,i+1)$ in $T$. If $m < i < n$, we cannot add the arc $(i-1,i+1)$ because the weight of this arc will be a positive integer multiple of $\sqrt{2}$, so it will not be a unit. This means that there must be at least one arc from $T$ incident to each vertex $i$ for $m < i < n$. 

We apply this observation to vertex $v$. One can check that $f_\D(v,k) = (1+2\sqrt{2})\ell_a$ where $a+1$ is the number of quadrilaterals $(v,k)$ passes through. We see that $N((1+2\sqrt{2})\ell_a) = N(1+2\sqrt{2}) N(\ell_a)= 7$ so $(v,k)$ cannot be in $T$. If the triangulated region only consists of this triangle, we are done. Otherwise, let $i$ be the smallest value such that $(v,i) \in T$. We first assume $i < k$. 

We cannot have $(n,k) \in T$ since this would cross $(v,i)$. Let $j$ be the smallest value such that $(n,j) \in T$. We know such a value exists by the previous discussion, and we know that $j \leq i$ since otherwise $T$ would contain  a pair of intersecting arcs.

Since $j$ is minimal, $(n,j)$ is in the same triangle in the triangulation as the boundary arc $(1,n)$. The third side of this triangle, $(1,j)$, only passes through the triangulated part of $\D$. This means that $f_{\D}(1,j) \in \mathbb{Z}$. Since we need this to be a unit, it must be that  $f_{\D}(1,j) = 1$. Now, since $(1,k)$ and $(n,j)$ cross, we can use the Ptolemy relation we analyze the relation between $f_{\D}(1,k)$ and $f_{\D}(n,j)$,
\[
f_{\D}(n,j) = f_{\D}(n,j) f_{\D}(1,k) = f_{\D}(1,n) f_{\D}(j,k) + f_{\D}(1,j) f_{\D}(k,n) = b + \ell_1
\]
where $b \geq 1$ is a positive integer since the arc $(j,k)$ only crosses triangles. We again see here that $f_{\D}(n,j)$ is not an integer. Therefore, it is impossible to create such a triangulation.

If we had $i > k$ instead, we could again show that such a triangulation is impossible by repeating the above arguments with vertices $m$ and $ m+1$. 
\end{proof}

\section{Type 3 Dissections}\label{sec:Type3}

In this section we verify that Conjecture \ref{conj:UnitaryConj} is true for another family of dissections. 

\begin{definition}
A \emph{type $d$} dissection is a polygon dissected into squares and triangles such that each vertex belongs to no more than $d$ subpolygons. Equivalently, viewing the dissected polygon as a graph, every vertex of the polygon has degree at most $d+1$. 
\end{definition}

\begin{remark}\label{rem:typetwo}
Note that a type 2 dissection would be a stack, a tower, or a gluing of two towers. The fact that a stack cannot be given a unitary triangulation follows from Lemma \ref{lem:friezeFrom4angulationNotUnitary} while a tower or gluing of two towers can be given a unitary triangulation by Theorem \ref{thm:Towers}. Thus, we know our conjecture is true for type 2 dissections.
\end{remark}

We first introduce a useful class of triangles in a triangulation. Let a \emph{basic triangle} in a dissection of a surface be a triangle with exactly two sides along the boundary of the surface. The following fact can be found for example in \cite{BCI}.

\begin{lemma} \label{lem:2Ears}
In any triangulation of an $n$-gon, $n \geq 4$, there are at least two basic triangles. 
\end{lemma}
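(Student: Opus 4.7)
The plan is to use the classical dual-tree structure of a polygon triangulation. Given a triangulation $T$ of a convex $n$-gon $P$, form the \emph{dual graph} $G_T$ whose vertices are the triangles of $T$ and whose edges connect two triangles that share a diagonal of $T$. A standard fact about triangulated polygons is that $G_T$ is a tree with exactly $n-2$ vertices: it is connected because $P$ is simply connected, it has no cycles because any cycle in the dual would enclose an interior vertex that does not exist, and the vertex count is forced by the $n-2$ triangles that any triangulation of an $n$-gon must contain.

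The key observation is that a triangle $\Delta \in T$ is a leaf of $G_T$ if and only if $\Delta$ meets exactly one diagonal of $T$, which is equivalent to saying that two of its three sides are boundary arcs of $P$ — in other words, $\Delta$ is basic. Thus counting basic triangles reduces to counting leaves of $G_T$. Since $n \geq 4$ implies $|V(G_T)| = n - 2 \geq 2$, and every tree on at least two vertices has at least two leaves, we conclude that $T$ has at least two basic triangles.

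There is essentially no obstacle here; the only small point to justify carefully is the leaf-versus-basic-triangle correspondence (a triangle in $T$ has three sides, each of which is either a boundary arc of $P$ or a diagonal of $T$, and its degree in $G_T$ equals the number of diagonal sides). If one preferred to avoid appealing to the dual tree, a short induction on $n$ would also work: the base case $n=4$ is immediate since any triangulation of a quadrilateral consists of two triangles, each with exactly two boundary sides; for the inductive step, pick any diagonal $d$, which splits $P$ into two subpolygons $P_1$ and $P_2$, and observe that each subpolygon with at least four vertices contributes, by induction, a basic triangle whose two boundary sides lie on $P$ rather than on $d$, while a subpolygon that is itself a triangle is already a basic triangle of $P$.
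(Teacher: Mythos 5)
Your proof is correct, but it reaches the conclusion by a different (if closely related) route than the paper. The paper's argument is a direct double count: writing $A_1, A_2, A_3$ for the numbers of triangles with two, one, or zero boundary sides, it combines $A_1+A_2+A_3 = \#\{\text{triangles}\}$ with the fact that summing non-boundary sides over all triangles counts each diagonal twice, and deduces $A_1 \geq 2$ from the resulting pair of linear relations (in fact the exact identity is $A_1 = A_3 + 2$). Your dual-tree argument packages exactly the same count — the degree sum of the dual graph equals twice its number of edges — into the statement that a tree on at least two vertices has at least two leaves, after checking that leaves of the dual tree are precisely the basic triangles. What your version buys is a cleaner appeal to a standard fact and no arithmetic to slip on (the paper's computation as printed actually contains a miscount of the number of triangles and a reversed inequality, though the intended argument is sound); what the paper's version buys is self-containedness, since it never needs to establish that the dual graph is a tree. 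Your inductive alternative is also valid, with the one small point you should make explicit: the induction gives each subpolygon $P_i$ with at least four vertices \emph{two} basic triangles, and since the splitting diagonal $d$ can be a side of at most one triangle of $P_i$, at least one of them has both short sides on the boundary of $P$ and hence survives as a basic triangle of $P$.
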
 

\begin{proof}

Let $P$ be an $(n+3)$-gon and let $T$ be a triangulation of $P$. Then, $T$ consists of $n$ arcs and divides $P$ into $n+2$ triangles.

We will count, with multiplicity,  the number of non-boundary sides of triangles in two ways. On the one hand, since there are $n$ arcs in $T$, this number must be $2n$. Let $A_1 \geq 0$ be the number of basic triangles, $A_2 \geq 0$ the number of triangles with one edge along the boundary, and $A_3 \geq 0$ the number of triangles with no edges along the boundary. Then, $n+2 = A_1 + A_2 + A_3$, and \[
2n = A_1 + 2A_2 + 3A_3 \leq A_1 + 2(A_2 + A_3).
\]

We see that this equation will not have any solutions unless $A_1$ is at least 2. 


\end{proof}

We will show Conjecture \ref{conj:UnitaryConj} holds for type 3 dissections by describing an algorithm for building triangulations of a polygon which are unitary with respect to $f_{\mathcal{D}}$ and then showing that $\mathcal{D}$ must be a gluing of towers in order for our algorithm to terminate in step (3). The algorithm will begin with a polygon with dissection and will try to build a unitary triangulation by choosing arcs with unit weight one by one. Viewing the construction of a unitary triangulation chronologically allows us to describe what sort of arcs can and cannot be used. For example, in Lemma \ref{lem:AlgFailsWithPell}, we show that after using a certain type of arc called a ``Pell arc'', our algorithm will have to terminate early and will not produce a unitary triangulation.

\begin{algorithm}\label{TriAlgorithm} The input of our algorithm will be an $n$-gon $P = P_0$ on vertices $\{0,\ldots,n-1\}$ and a dissection $\mathcal{D}$ of $P_0$ into triangles and quadrilaterals. We initialize $T = \emptyset$. Notice we do not change the dissection $\mathcal{D}$ during the algorithm.
\begin{enumerate}
    \item[(0)] Let $i = 0$.
    \item[(1)] If $i = n-3$, then $P_i$ is a triangle and the algorithm terminates. 
    \item[(2)] If $i < n-3$, and there exists a diagonal $\tau_i = \{a,b\}$ which forms a basic triangle in $P_i$ such that $f_{\mathcal{D}}(a,b) \in \Z[\sqrt{2}]^\times$, we add $\tau_i$ to $T$. We form an $(n-i-1)$-gon $P_{i+1}$ by removing the boundary edges of this basic triangle, so that $(a,b)$ is now a boundary edge.  Add 1 to $i$ and return to step 2.
    \item[(3)] If $i < n-3$, and for every diagonal $(a,b)$ forming a basic triangle in $P_i$, $f_{\mathcal{D}}(a,b) \notin \Z[\sqrt{2}]^\times$, then the algorithm terminates. 
\end{enumerate}
\end{algorithm}

If the algorithm terminates in step (1), then the set $T$ is a unitary triangulation of $P$. If the algorithm terminates in step (3), then it is impossible for the partial triangulation $T$ to be completed to a unitary triangulation.

There will often be more than one possible arc that we could add in step (2) of the Triangulation Algorithm, and this choice can affect future choices. Therefore, we must run the Triangulation Algorithm multiple times to exhaustively test whether there exists a unitary triangulation of a polygon $P$. The number of times we would need to run the algorithm is bounded above by the product of the number of triangulations of $P$ and the number of permutations of the arcs in each triangulation; in particular, we only need to run the algorithm a finite number of times.

In Theorem \ref{thm:Towers}, we showed that unitary triangulations always exist with respect to dissections which are the result of gluing towers. Thus, we will show Conjecture \ref{conj:UnitaryConj} is true for type 3 dissections by showing, conversely, that the Triangulation Algorithm will never terminate with a triangulation of $P$ (i.e., in step (3)) if $\mathcal{D}$ is not a gluing of towers. 

\begin{thm}\label{thm:TypeThreeNonTowerFails}
If $\mathcal{D}$ is a type $d$ dissection of polygon $P$, for $d \leq 3$, which is not a gluing of towers, then the Triangulation Algorithm will never produce a unitary triangulation when it begins with $P$ and $\mathcal{D}$. That is, the algorithm will always terminate in step (3).
\end{thm}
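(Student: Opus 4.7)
The plan is to prove the contrapositive: starting from a hypothetical successful run of the Triangulation Algorithm, we will show that $\mathcal{D}$ must decompose as a gluing of towers. Equivalently, if $\mathcal{D}$ is not a gluing of towers, then at some step the algorithm has no admissible basic-triangle diagonal with unit weight, and hence terminates in step (3). By Lemma \ref{lem:2Ears}, basic triangles always exist in the remaining polygon $P_i$, so the obstruction must be that every such basic-triangle diagonal has non-unit weight.

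First I will classify the initial ears. For a vertex $v$ of $P_0$ incident to $k \leq 3$ subpolygons of $\mathcal{D}$, unfolding $f_{\mathcal{D}}(v-1,v+1)$ by iterated Ptolemy across the $k-1$ arcs of $\mathcal{D}$ at $v$ yields a value that depends only on the sequence of triangle/quadrilateral types at $v$. When $k = 1$, the weight is $1$ (triangle) or $\sqrt{2}$ (quadrilateral); when $k = 2$, the weight lies in $\{2,\ 1+\sqrt{2},\ 2\sqrt{2}\}$, of which only $1+\sqrt{2} = \ell_1$ is a unit, and only when exactly one triangle and one quadrilateral meet at $v$; and when $k = 3$, the eight possible configurations yield weights in $\{3,\ 2+\sqrt{2},\ 1+2\sqrt{2},\ 3\sqrt{2}\}$, none of which are units (their norms are $9$, $2$, $7$, and $18$ respectively). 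Hence each admissible initial ear corresponds to the roof arc of an existing $0$-tower or $1$-tower inside $\mathcal{D}$, consistent with Corollary \ref{cor:TowerArcsUnits}.

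Next I will run an induction on the step $i$. The inductive claim is that $\tau_0,\ldots,\tau_{i-1}$ are all tower arcs, so that $P_i$ is obtained from $P_0$ by peeling off completed upper layers of several towers of $\mathcal{D}$; moreover, any basic-triangle diagonal of $P_i$ with unit weight must again be a tower arc, either continuing to peel an existing tower down one more quadrilateral or initiating a fresh tower after one has been fully removed. The basic triangles of $P_i$ split according to whether each of their bounding arcs is an original edge of $P_0$ or a previously-added arc $\tau_j \in T$. For the fully-original case, the classification above applies verbatim. For the other cases, the candidate diagonal crosses additional subpolygons of $\mathcal{D}$, and a further Ptolemy expansion on the local dissection at the apex is needed. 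The enumeration of these cases, carried out in Appendix \ref{sec:AppA}, shows that the only unit values that arise are the tower-arc weights $\ell_n$, and only when the diagonal is the natural next tower arc. The key obstruction is Lemma \ref{lem:AlgFailsWithPell}: any diagonal that sweeps across a run of non-tower subpolygons produces a weight whose $\mathbb{Z}$ and $\sqrt{2}$ coefficients are Pell-type quantities (as in Proposition \ref{prop:relateSnDnContFrac}), and in the type~$3$ setting such a weight is never a unit.

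Combining these steps, a successful run of the algorithm peels $\mathcal{D}$ one tower layer at a time, and the resulting sequence exhibits $\mathcal{D}$ as a gluing of towers. Contrapositively, if $\mathcal{D}$ is not a gluing of towers, then at some step every basic-triangle diagonal of $P_i$ has non-unit weight and the algorithm terminates in step (3). The main obstacle is the inductive case analysis for general $i$: unlike the first step, the bounding arcs of a basic triangle at step $i$ may be tower arcs $\tau_j$ reaching deep into $\mathcal{D}$, and the candidate diagonal can cross many subpolygons with interacting Ptolemy contributions. Ruling out every non-tower configuration in this setting is the technical heart of the argument; it uses the type~$3$ hypothesis to limit local multiplicity at each apex vertex, together with the Pell-arc lemma to obstruct units from longer crossings.
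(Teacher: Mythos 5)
Your overall architecture matches the paper's: classify the ears available at the first step (your $k\le 3$ computation of $f_{\mathcal D}(v-1,v+1)$ is exactly the content of Lemma \ref{lem:BasicTriangleAlongBoundary}), then argue inductively that a successful run can only ever peel off tower arcs and arcs of $\mathcal D$, and conclude via the observation that such a triangulation forces $\mathcal D$ to be a gluing of towers (Lemma \ref{lem:towerTriangulation}). However, there is a genuine gap in how you handle Pell arcs, and it sits at the technical heart of the proof. Your inductive claim --- that ``any basic-triangle diagonal of $P_i$ with unit weight must again be a tower arc,'' and later that ``the only unit values that arise are the tower-arc weights $\ell_n$'' --- is false. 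The case analysis of Lemma \ref{lem:BasicTriangleNotBoundary} and Appendix \ref{sec:AppA} shows that a basic triangle built from tower arcs and boundary arcs can have as its third side a tower arc, a \emph{Pell arc}, or a non-unit; and a Pell arc between an $i$-tower and a $j$-tower has weight $\ell_{i+j+1}$, which \emph{is} a unit. So the algorithm can legitimately select a Pell arc in step (2), and your induction breaks at that branch.

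Relatedly, you mischaracterize Lemma \ref{lem:AlgFailsWithPell}: it does not say that Pell-type weights ``are never units.'' It says that \emph{after} the algorithm adds a (unit-weight) Pell arc, no subsequent basic triangle having that Pell arc as a side can be completed with unit-weight edges, so the run dead-ends in step (3) later. That requires a second, separate round of casework --- triangles with one side a Pell arc and another side a Pell arc, tower arc, or arc of $\mathcal D$ --- including norm estimates such as showing $\ell_{j+1} < \ell_{j+1}+\ell_j+\ell_{j-1}+(d_j+\sqrt 2 d_{j-1}) < \ell_{j+2}$, and an argument that the one configuration of two Pell arcs whose third side is a unit can never actually be reached by the algorithm because of a crossing with an earlier forced arc. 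Without this Pell-arc dead-end analysis, your induction does not close: you must either rule out Pell arcs ever being chosen (impossible, since they are units) or show that every run choosing one terminates in step (3), which is precisely the missing ingredient.
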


We will prove a series of smaller results and then put them together to prove Theorem \ref{thm:TypeThreeNonTowerFails}. We begin by showing that the first arcs which appear in the Triangulation Algorithm must be tower arcs or arcs from $\D$.



\begin{lemma}\label{lem:BasicTriangleAlongBoundary}
Consider a basic triangle with both boundary edges having weight 1. Then, the non-boundary edge of this triangle is either a tower arc, an arc in the dissection, or has non-unit weight. 
\end{lemma}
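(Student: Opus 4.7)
My plan is to reduce the claim to a Pell-type norm computation via iterated Ptolemy at the apex $b$. First, I would invoke the characterization of weight-$1$ arcs (recalled in the proof of Lemma \ref{lem:friezeFrom4angulationNotUnitary}) to deduce that each of $(a,b)$ and $(b,c)$ is either a boundary arc of $P_0$ or an arc of $\D$. Consequently the angular sector at $b$ bounded by these two edges is an honest union of subpolygons of $\D$: let $(b,v_1),\ldots,(b,v_k)\in\D$ be the arcs of $\D$ incident to $b$ inside this sector, listed from the $(a,b)$-side to the $(b,c)$-side, and set $v_0=a$, $v_{k+1}=c$. These arcs cut the sector into subpolygons $S_0,\ldots,S_k$, with $S_j$ adjacent to $(b,v_j)$ and $(b,v_{j+1})$ at $b$; each $S_j$ is a triangle on $\{b,v_j,v_{j+1}\}$ or a quadrilateral on $\{b,v_j,w_j,v_{j+1}\}$.

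Next I would apply the Ptolemy relation iteratively at the crossings of $(a,c)$ with the arcs $(b,v_1),\ldots,(b,v_k)$. Using $f_\D(a,b)=f_\D(b,c)=f_\D(b,v_j)=1$ throughout, the first application gives $f_\D(a,c)=f_\D(a,v_1)+f_\D(c,v_1)$, and iterating the analogous identity on $f_\D(c,v_j)$ down to the base case $f_\D(c,v_k)=f_\D(v_k,v_{k+1})$ produces the telescoping identity
\[
f_\D(a,c) \;=\; \sum_{j=0}^{k} f_\D(v_j,v_{j+1}).
\]
Since $(v_j,v_{j+1})$ sits inside $S_j$ either as a side of a triangle or as a diagonal of a quadrilateral, each summand is $1$ or $\sqrt{2}$, so $f_\D(a,c)=t+q\sqrt{2}$ where $t$ and $q$ count triangular and quadrilateral $S_j$ respectively. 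Because $N(t+q\sqrt{2})=|t^2-2q^2|$, unithood is equivalent to the Pell condition $t^2-2q^2=\pm 1$.

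I would then dispatch the two smallest Pell solutions directly. If $(t,q)=(1,0)$, then $k=0$ and $S_0$ is the triangle on $\{a,b,c\}$, forcing $(a,c)\in\D$. If $(t,q)=(1,1)$, then $S_0\cup S_1$ is a pentagon composed of one triangle glued to one quadrilateral along $(b,v_1)$; by inspection its incidence pattern matches the definition of a $1$-tower, with roof point the unique vertex of its triangle not shared with its quadrilateral, and $(a,c)$ connects that roof point to a non-adjacent vertex of the pentagon and is therefore a tower arc.

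The main obstacle is dealing with the larger Pell pairs $(3,2),(7,5),\ldots$, each of which would require $b$ to be simultaneously incident to at least five subpolygons of $\D$ in the sector. This cannot occur in the type-$3$ setting in which the lemma is ultimately applied en route to Theorem \ref{thm:TypeThreeNonTowerFails}, and it is precisely the phenomenon handled by the subsequent Pell-arc lemma; the remaining geometric bookkeeping -- verifying that $(c,v_j)$ really does cross $(b,v_{j+1})$ at each stage of the iteration -- follows from the cyclic ordering of $a,v_1,\ldots,v_k,c,b$ around $\partial P_0$ that is forced by the sector structure.
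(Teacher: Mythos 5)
Your proposal is correct and follows essentially the same route as the paper: both arguments reduce the weight of the third edge to $t+q\sqrt{2}$, where $t$ and $q$ count the triangles and quadrilaterals of $\D$ incident to the apex, and then use the type-$3$ bound $t+q\le 3$ to conclude that the only attainable units are $1$ (a dissection arc) and $1+\sqrt{2}$ (a tower arc). The paper simply cites the Holm--J{\o}rgensen formula $f_\D(e)=\sum_{p_i}\lambda_{p_i}$ and inspects the possible sums, rather than re-deriving that formula by telescoping Ptolemy relations and phrasing unithood as a Pell equation, but the skeleton is identical.
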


\begin{proof}

 If $e$ is a diagonal skipping exactly one vertex, $v$, in a polygon $P$, then $f_\D(e) = \sum_{p_i} \lambda_{p_i}$ where we sum over the sizes $p_i$ of the subpolygons that $v$ is incident to (see \cite{HJ} ). Since the units in $\mathbb{Z}[\sqrt{2}]$ are $(1+\sqrt{2})^n$, and a vertex can be incident to at most three subpolygons in a type 3 dissection, the only units we will see are 1 and $1+\sqrt{2}$. The former occurs when $e$ is an arc in the dissection bounding a triangle and the latter occurs when $e$ is a tower arc. 
 
 \begin{center}
\begin{tabular}{c|c}
  \begin{tikzpicture}[scale = 1.5]
  \draw (0,0) -- (1,1) -- (2,0) -- (0,0);
  \node[above] at (1,1){$v$};
  \draw[thick] (2,0) to node[below]{1} (0,0);
  \end{tikzpicture}   &  
  \begin{tikzpicture}
  \draw (0,0) -- (4,0) -- (2,2) -- (0,2) -- (0,0);
  \draw (2,0) -- (2,2);
  \node[above] at (2,2){$v$};
  \draw[thick] (4,0) -- (0,2);
  \node[below] at (1,1.25){$1+\sqrt{2}$};
  \end{tikzpicture}
  \\
\end{tabular}
\end{center}

\end{proof}

It is possible that, in the course of running the triangulation algorithm, arcs other than tower arcs appear. These will be the result of forming basic triangles with arcs which were not boundary arcs in the original polygon. 

\begin{definition}\label{lem:case5arc}
Consider a dissection in which an $i$-tower and a $j$-tower, with $i \geq 1$ and $j \geq 0$, which share one vertex. Moreover, this vertex is also part of a triangle; see Figure \ref{fig:Pell}. We refer to the arc between the roof points of the towers (where we consider the roof point of the $j$-tower to be the vertex not adjacent to the $i$-tower) a \emph{Pell arc}. This is arc $(a,b)$ using notation from Figure \ref{fig:Pell}.

\end{definition}

First, we show that Pell arcs have unit weight. 

\begin{figure}
\centering
\begin{tikzpicture}[scale = 0.7]
 \draw(0,0) -- (1,1) -- (1,-1) -- (0,0);
 \draw (1,1) -- (8,1) -- (8,-1) -- (1,-1);
 \draw (6,-1) -- (8,-3) -- (8,-1);
 \draw (6,-1) -- (6,1);
 \draw (3,1) -- (3,-1);
\draw (8,-1) -- (15,-1) -- (15,-3) -- (8,-3);
\draw (15,-1) -- (16,-2) -- (15,-3);
\draw (10,-1) -- (10,-3);
\draw (13,-1) -- (13,-3);
\node[] at (4.5,0){$\cdots$};
\node[] at (11.5,-2){$\cdots$};
\draw [decorate,
    decoration = {brace}] (1,1.3) --  (8,1.3);
\node[] at (4.5,1.8){$i \geq 1$};
\draw [decorate,
    decoration = {brace}] (15,-3.3) --  (8,-3.3);
\node[] at (11.5, -3.8){$j \geq 0$};
\node[] at (-.3,0){$a$};
\node[] at (16.3,-2){$b$};
\node[below] at (6,-1){$c$}; 
\node[] at (8.2,-0.7){$d$};
\end{tikzpicture}
\caption{The arc between $a$ and $b$ is a Pell arc.}\label{fig:Pell}
\end{figure}

\begin{lemma}
A Pell arc between an $i$-tower and a $j$-tower has weight $\ell_{i+j+1}$.
\end{lemma}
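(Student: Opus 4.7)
The plan is to apply the Ptolemy relation twice, expressing $f_\D(a,b)$ in terms of tower arc weights handled by Corollary \ref{cor:TowerArcsUnits} together with the elementary identity $\ell_{i-1} + \ell_i = \sqrt{2}\,\ell_i$.  Using the labels of Figure \ref{fig:Pell}, let $e$ denote the third vertex of the middle triangle (the one shared with the $j$-tower but not with the $i$-tower), so that the middle triangle has vertices $c,d,e$ with $(c,d)$ and $(d,e)$ being dissection arcs and $(c,e)$ being a boundary edge of $P$.

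First, I would resolve $(a,b)$ at its intersection with the dissection arc $(d,e)$, obtaining
\[ f_\D(a,b) \;=\; f_\D(a,e)\,f_\D(b,d) \;+\; f_\D(a,d)\,f_\D(b,e). \]
Each of the arcs $(a,d),(b,d),(b,e)$ runs from the roof of a tower to one of its far corners, so Corollary \ref{cor:TowerArcsUnits} gives $f_\D(a,d) = \ell_i$ and $f_\D(b,d) = f_\D(b,e) = \ell_j$ (when $j=0$, these latter two degenerate to boundary edges of $P$, still with weight $1 = \ell_0$).  The remaining weight $f_\D(a,e)$ would be computed by resolving $(a,e)$ at its crossing with $(c,d)$, which simplifies to $f_\D(a,e) = f_\D(a,c) + \ell_i$ after using $f_\D(c,d) = f_\D(d,e) = f_\D(c,e) = 1$.

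To evaluate $f_\D(a,c)$, I would observe that $(a,c)$ shares the endpoint $c$ with the last internal dissection arc of the $i$-tower, so $(a,c)$ does not cross that arc.  Consequently every Ptolemy resolution of $(a,c)$ involves only the dissection arcs of the first $i-1$ quadrilaterals, and the weight of $(a,c)$ coincides with its weight in the sub-$(i-1)$-tower obtained by deleting the last quadrilateral of the $i$-tower; in that sub-tower $c$ is a far corner, so Corollary \ref{cor:TowerArcsUnits} gives $f_\D(a,c) = \ell_{i-1}$.  Combining the two resolutions and using $\ell_{i-1} + \ell_i = \ell_{i-1}(2+\sqrt{2}) = \sqrt{2}\,\ell_i$, we obtain
\[ f_\D(a,b) \;=\; \sqrt{2}\,\ell_i \cdot \ell_j + \ell_i \cdot \ell_j \;=\; \ell_i\,\ell_j\,(1+\sqrt{2}) \;=\; \ell_{i+j+1}, \]
as claimed.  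The main delicate step is the sub-tower identification $f_\D(a,c) = \ell_{i-1}$, which requires the argument above that deleting the last quadrilateral leaves an $(i-1)$-tower in which Corollary \ref{cor:TowerArcsUnits} applies; the small cases $i=1$ (where $(a,c)$ is itself a boundary edge of $P$, consistent with $\ell_0 = 1$) and $j=0$ (where the $j$-tower degenerates to a triangle) should also be checked directly to confirm the formula throughout.
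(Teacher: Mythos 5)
Your proof is correct and follows essentially the same route as the paper's: two Ptolemy resolutions at the two dissection arcs bounding the middle triangle, with Corollary \ref{cor:TowerArcsUnits} supplying the tower-arc weights. The only difference is the order of the resolutions --- the paper resolves $(a,b)$ at $(c,d)$ first and then computes $f_\D(b,c)=2\ell_j$, whereas you resolve at $(d,e)$ first and then compute $f_\D(a,e)=\sqrt{2}\,\ell_i$ --- which is a cosmetic rearrangement of the same calculation.
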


\begin{proof}
We use the notation from Figure \ref{fig:Pell} and resolve the intersection between $(a,b)$ and $(c,d)$. We have \[
f_\D(a,b) = f_\D(a,b) f_\D(c,d) = f_\D(a,c)f_\D(b,d) + f_\D(a,d) f_\D(b,c) = \ell_{i-1}\ell_j + \ell_i f_\D(b,c).
\]
Now, by resolving the intersection between $(b,c)$ and the arc in $\D$ adjacent to $d$ which separates the triangle from the square in the $j$-tower, we find $f_\D(b,c) = 2\ell_j$. Thus, we have \[
f_\D(a,b) = \ell_{i+j-1} + 2\ell_{i+j} = (1 + 2\ell_1)\ell_{i+j-1} = \ell_2\ell_{i+j-1} = \ell_{i+j+1}.
\]

\end{proof}

The name ``Pell arc'' is explained in the following remark, showing a relationship with the Pell numbers. 

\begin{remark}\label{rem:Pell}
 We show here that, if we remove the restriction of a type 3 dissection, then Pell arcs, with general $i$ and $j = 0$ in the notation of Figure \ref{fig:Pell} sit in a larger family of arcs with unit weight. Consider a $(2m)$-gon, with vertices $\{0,\ldots,2m-1\}$ and triangulated with arcs $\{(1,2m-1\} \cup \{(2i, 2m-2i), (2i,2m-2i+1): 0 < 2i < m\} \cup \{(2i-1, 2m - 2i), 2i-1,2m-(2i-1)) : m < 2i+1 < 2m-1\}$. As a set of $2m-3$, non-crossing arcs, this set gives a triangulation. Glue the boundary edge $(0,2m-1)$ of this triangulated polygon onto the one of the boundary edges on the last quadrilateral of a tower as below. This is illustrated in Figure \ref{fig:PellEx}. Then, if the roof point of the tower is $w$, we claim that $f_{\mathcal{D}}(w,m) = \ell_{i+m-1}$. 

 \begin{figure}
 \centering
\begin{tikzpicture}[scale = 0.7]
 \draw(0,0) -- (1,1) -- (1,-1) -- (0,0);
 \draw (1,1) -- (8,1) -- (8,-1) -- (1,-1);
 \draw (6,-1) -- (4.7,-2.5) -- (6,-4) -- (8,-4) -- (9.3,-2.5) -- (8,-1);
 \draw (6,-1) -- (6,1);
 \draw (3,1) -- (3,-1);
 \draw (9.3,-2.5) -- (6,-4) -- (8,-1) -- (4.7,-2.5);
\node[] at (4.5,0){$\cdots$};
\draw [decorate,
    decoration = {brace}] (1,1.3) --  (8,1.3);
\node[] at (4.5,1.8){$i \geq 1$};  
\node[left] at (0,0) {$w$};
\node[right] at (8,-1){$5$};
\node[right] at (9.3,-2.5){$4$};
\node[right,below] at (8,-4){$3$};
\node[left,below] at (6,-4){$2$};
\node[left] at (4.7,-2.5){$1$};
\node[left, xshift = -5pt, yshift = -6pt] at (6,-1){$0$};
\end{tikzpicture}
\caption{As is explained in Remark \ref{rem:Pell}, for any length of stack $i \geq 1$, the arc $(w,3)$ will have unit length.}\label{fig:PellEx}
 \end{figure}
 
 Using $f_{\D}(0,2m-1) = 1$, and the fact that $(0,2m-1)$ and $(w,m)$ cross, we can express $f_{\D}(w,m)$ in terms of arcs in the triangulated subpolygon and tower arcs, \[
f_{\mathcal{D}}(w,m) = \ell_i f_{\mathcal{D}}(0,m) + \ell_{i-1} f_{\mathcal{D}}(2m-1,m).
 \]

 As a consequence of Theorem A in \cite{ccanakcci2018cluster}, $\frac{f_{\mathcal{D}}(0,m)}{f_{\mathcal{D}}(2m-1,m)}$ is given by the continued fraction $[2,2,\ldots,2]$ consisting of $m-1$ 2's. It is well-known that these continued fractions have consecutive Pell numbers $Q_m$ and $Q_{m-1}$ in the numerator and denominator, and $\gcd(Q_m,Q_{m-1}) = 1$. Recall the Pell numbers $Q_k$ are initialized $Q_0 = 0, Q_1 = 1$ and for $k \geq 2$, $Q_k = 2Q_{k-1} + Q_{k-2}$. Therefore, we have that $f_{\mathcal{D}}(w,m) = Q_m \ell_i + Q_{m-1} \ell_{i-1} = \ell_{i-1} ((Q_m + Q_{m-1}) + Q_{m-1} \sqrt{2})$. 
 
 Now, recall our notation from  Proposition \ref{prop:relateSnDnContFrac}, where $a_n$ was the numerator of $[1,2,\ldots,2]$ with $n-1$ entries $2$ and $b_n$ was the denominator of the same continued fraction. We can use the identities from the proof to show $a_n = b_n + b_{n-1}$. Since the $b_n$ are exactly the Pell numbers, we have that $Q_{m-1}\sqrt{2} = b_{m-1}\sqrt{2}$ and $Q_m + Q_{m-1} = b_m + b_{m-1} = a_m$. Thus, from this Proposition, we conclude that $Q_m + Q_{m-1} + Q_{m-1} \sqrt{2} = s_m + d_m = \ell_m$, so that $f_{\mathcal{D}}(w,m) = \ell_{i+m-1}$.

 When $2m > 4$, the dissections described here are not type 3 dissections as they require at least one vertex to be incident to four subpolygons. The existence of these arcs is part of our motivation to focus on type $d$ dissections for $d \leq 3$, given that our current techniques rely on checking a finite number of cases. However, the existence of these arcs does not lead us to believe Conjecture \ref{conj:UnitaryConj} is false. 

\end{remark}

We show next that Pell arcs are the only new type of arcs which can appear in our triangulation algorithm once we have a partial triangulation with arcs from $\D$ and tower arcs.

\begin{lemma}\label{lem:BasicTriangleNotBoundary}
An arc that forms a triangle with two tower arcs or a tower arc and a boundary arc is either a tower arc, a Pell arc, or has non-unit weight. 

\end{lemma}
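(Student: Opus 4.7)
The plan is a case analysis on what the two existing sides of the triangle are. Call them $\tau_1, \tau_2$, let $v$ be their common vertex, and let $\gamma$ be the arc in question. The workhorse in every case is to resolve $f_\D(\gamma)$ by applying the Ptolemy relation at an intersection of $\gamma$ with a dissection arc of $\D$, and then to check whether the outcome can be a unit by invoking the norm condition $N(a + b\sqrt{2}) = |a^2 - 2b^2| = 1$ together with the identities of Lemma \ref{lem:RecurrenceSnDn} and Proposition \ref{prop:relateSnDnContFrac}.

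First I would handle the case that both $\tau_1, \tau_2$ are tower arcs. If they belong to the same tower $T$, they share its roof point $r = v$, and $\gamma$ connects two non-roof vertices of $T$; iterating Ptolemy across the stack identifies $f_\D(\gamma)$ as one of $s_k$ or $d_k$, which by Proposition \ref{prop:relateSnDnContFrac} have the form $a_k$ or $b_k\sqrt{2}$. The norm condition then admits a unit only in the degenerate cases $s_1 = 1$ or $d_0 = 1$, corresponding to $\gamma$ being a boundary arc of $P$ or an arc already belonging to $\D$ — in either case not a genuinely new arc. If instead the two tower arcs come from distinct towers $T_1, T_2$, then $\gamma$ runs between their two roof points; the type 3 hypothesis leaves at most one further subpolygon at the shared vertex $v$, and when this subpolygon is a triangle we recognize the Pell arc configuration of Definition \ref{lem:case5arc}, with $f_\D(\gamma) = \ell_{i+j+1}$ a unit. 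In every other type 3 configuration at $v$, a single Ptolemy resolution across the dissection arc separating $T_1$ and $T_2$ produces an additional positive factor whose norm strictly exceeds $1$, killing unit status.

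Next I would handle the case that $\tau_1 = (r, x)$ is a tower arc of a tower $T$ and $\tau_2$ is an original boundary arc of $P$ sharing a vertex with $\tau_1$. If the shared vertex is the roof $r$, then $\tau_2$ is a side of the roof triangle of $T$, and $\gamma$ becomes a $\sigma$- or $\delta$-style arc within $T$; resolving through the stack gives $f_\D(\gamma) = s_{k-1}$ or $d_{k-1}$ which, for the forced $k \geq 2$, is never a unit. If the shared vertex is $x$ and the other endpoint of $\tau_2$ still lies inside $T$, then $\gamma$ is another tower arc of $T$; if instead that endpoint lies in an adjacent subpolygon outside $T$, a Ptolemy resolution at the dissection arc between $T$ and that neighbor pulls in a positive factor whose norm exceeds $1$, again ruling out unit status.

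The main obstacle is the different-tower subcase of Case 1: under the type 3 restriction, two towers can meet at a shared vertex in several geometrically distinct ways — with a third triangle at $v$ (the Pell setup), with a quadrilateral from a third tower, with the two towers glued along a common edge at $v$, and so on — and each configuration requires its own Ptolemy-then-norm calculation. This is precisely the casework the paper defers to Appendix \ref{sec:AppA}, and I expect it, rather than any single clever identity, to carry most of the technical weight of the proof.
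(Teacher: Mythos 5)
Your overall strategy is the one the paper uses: the type 3 hypothesis reduces the problem to a finite list of local configurations, each resolved by a Ptolemy computation at a crossing with a dissection arc followed by a norm check via Lemma \ref{lem:RecurrenceSnDn} and Proposition \ref{prop:relateSnDnContFrac}; the paper likewise defers the bulk of this to the table in Appendix \ref{sec:AppA} and works only one representative entry in full. However, two of your steps would fail as written. In your tower-arc-plus-boundary-arc case you assert that whenever the far endpoint of the boundary arc lies in a subpolygon outside the tower $T$, the third side is automatically a non-unit. This is false: the Pell arc with $j=0$ (the paper's entry 3 with the optional shape a quadrilateral) arises in exactly this way --- the tower arc $(a,d)$ of the $i$-tower together with the edge $(d,b)$ of an adjacent triangle, which may well be an original boundary edge of $P$, bound a triangle whose third side has weight $\ell_{i+1}$, a unit. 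Your Case 1b catches this configuration only if one agrees to view $(d,b)$ as a tower arc of a $0$-tower; as written, your Case 2 branch sets out to prove that this arc is not a unit, and the computation will refuse to cooperate. Since the downstream argument (Lemma \ref{lem:AlgFailsWithPell}) must account for every way a Pell arc can enter the Triangulation Algorithm, getting the list of unit-producing configurations exactly right is the entire content of this lemma.

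Second, the claim that a Ptolemy resolution ``produces an additional positive factor whose norm strictly exceeds $1$'' misdescribes the mechanism. Ptolemy yields a \emph{sum} of two products, and the norm is multiplicative but not additive; after factoring out a unit one is left with an expression such as $d_j + \ell_{j+1}$ (the paper's sample computation for entry 12), and showing that such a sum has norm different from $1$ genuinely requires the parity analysis through Proposition \ref{prop:relateSnDnContFrac}, or a squeeze between consecutive units as in the proof of Lemma \ref{lem:AlgFailsWithPell}. You do acknowledge that each configuration needs its own calculation, but the reason you give for why those calculations all succeed is not an argument that works.
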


\begin{proof}
 Since each vertex is incident to at most three subpolygons in the dissection, we can construct a finite list of ways we can form a triangle which has either two tower arcs as edges or a tower arc and a boundary arc as edges. See Appendix \ref{sec:AppA} for a table depicting every such  case; note that some entries denote multiple cases.
 We can prove that, if the third arc is not a tower arc or a Pell arc, then the third arc has non-unit weight by analyzing each such case. The cases that give tower or Pell arcs are entry 2 where both optional shapes are triangles (this gives a tower arc), entry 3 where the optional shape is a quadrilateral (this gives a Pell arc where one tower is a 0-tower), and entry 17 where the optional shape is a triangle (this gives a more general Pell arc). Thus, we prove this Lemma by analyzing all other cases.

Here, we provide a sample calculation to show entry 12 in the table in Appendix \ref{sec:AppA} where the dashed line is deleted does not have unit weight. Let $i,j \geq 1$. Note that the arc $(u,v)$ forms a triangle with the tower arcs $(u,x)$ and $(x,v)$.



\begin{center}
\begin{tikzpicture}[scale = 0.5]
 \draw(0,0) -- (1,1) -- (1,-1) -- (0,0);
 \draw (1,1) -- (8,1) -- (8,-1) -- (1,-1);
 \draw (6,-1) -- (8,-3) -- (8,-1);
 \draw (6,-1) -- (6,1);
 \draw (3,1) -- (3,-1);
\draw (8,-1) -- (15,-1) -- (15,-3) -- (8,-3);
\draw (10,-1) -- (10,-3);
\draw (13,-1) -- (13,-3);
\node[] at (4.5,0){$\cdots$};
\node[] at (11.5,-2){$\cdots$};
\draw [decorate,
    decoration = {brace}] (1,1.3) --  (8,1.3);
\node[] at (4.5,1.8){$i \geq 1$};
\draw [decorate,
    decoration = {brace}] (15,-3.3) --  (8,-3.3);
\node[] at (11.5, -3.8){$j \geq 1$};
\node[] at (-.3,0){$u$};
\node[right] at (15,-3){$v$};
\node[left,yshift = -5pt] at (6,-1){$x$};
\node[right,yshift = 5pt] at (8,-1){$y$};
\end{tikzpicture}
\end{center}

Using the Ptolemy relation on the crossing of $(u,v)$ and $(x,y)$, we have

\begin{align*}
f_\D(u,v) &= f_\D(u,v)f_\D(x,y) = f_\D(u,x)f_\D(v,y) + f_\D(u,y) f_\D(v,x)\\
&= \ell_{i-1} d_j + \ell_{i}\ell_j = \ell_{i-1}(d_j + \ell_{j+1}).
\end{align*}

Indeed, $(u,x), (u,y),$ and $(v,x)$ are tower arcs, and $f_{\D}(v,y) = d_j$ by definition.

Since the norm function is multiplicative and $N(\ell_{i-1}) = 1$, we have $N(f_\D(u,v)) = N(d_j + \ell_{j+1})$. Then, by separately evaluating cases where $j$ is even or odd and using Proposition \ref{prop:relateSnDnContFrac}, we can show in each case that $N(d_j + \ell_{j+1}) > 1$, which implies that $f_\D(u,v)$ is not a unit. 

The remaining cases can be shown with similar calculations.


\end{proof}

Even though Pell arcs can appear during the Triangulation Algorithm, the next result shows that they will not be present in a unitary triangulation of a polygon with respect to a frieze from a type 3 dissection. 

\begin{lemma}\label{lem:AlgFailsWithPell}
The Triangulation Algorithm cannot successfully produce a unitary triangulation if in at least one step it adds a Pell arc to the set $T$.
\end{lemma}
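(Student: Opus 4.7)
My plan is to exhibit an obstruction involving the triangle adjacent to the newly added Pell arc. Let $(a,b)$ be the Pell arc added at step $t$ of the algorithm. By construction it is added via a basic triangle $(a,d,b)$ in $P_t$ whose other two sides $(a,d)$ and $(d,b)$ are tower arcs of weight $\ell_i$ and $\ell_j$ respectively, where $d$ is the shared vertex of the $i$-tower and the $j$-tower; let $c,w$ be the remaining two vertices of the connecting triangle, with $c$ in the $i$-tower and $w$ in the $j$-tower. After step $t$, the polygon $P_{t+1}$ has $(a,b)$ as a boundary edge and does not contain $d$. Suppose for contradiction that the algorithm eventually produces a unitary triangulation. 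Then $(a,b)$ must bound a unique triangle $(a,v,b)$ in that triangulation, for some $v\in P_{t+1}\setminus\{a,b\}$, and both $(a,v)$ and $(v,b)$ must be unit-weight arcs of $f_\D$.

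The core of the proof is to show that no such $v$ exists. For $v$ on the $j$-tower side of the dissection arc $(d,c)$, the arcs $(a,v)$ and $(d,c)$ cross, so Ptolemy gives
\[
f_\D(a,v) \;=\; \ell_{i-1}\, f_\D(v,d)\;+\;\ell_i\, f_\D(v,c).
\]
Specializing $v=b$ reproduces the Pell weight $\ell_{i+j+1}$; for every other $v$ on the $j$-side I would expand $f_\D(v,d)$ and $f_\D(v,c)$ using the $s_n,d_n$ quantities of the $j$-tower (Lemma~\ref{lem:RecurrenceSnDn} and Proposition~\ref{prop:relateSnDnContFrac}) together with the known weight $f_\D(v,c)$ computed through the connecting triangle, and check that the resulting element has norm strictly greater than $1$, hence is not a unit. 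Applying the symmetric argument at the crossing of $(v,b)$ with $(d,w)$ yields that $f_\D(v,b)$ is not a unit for any $v\neq a$ on the $i$-tower side.

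Combining the two statements, the unit-weight arcs from $a$ that remain available in $P_{t+1}$ terminate only on the $i$-tower side of the Pell arc, while the unit-weight arcs from $b$ terminate only on the $j$-tower side. Because the $i$-tower and the $j$-tower share only the vertex $d$, and $d$ has been removed from $P_{t+1}$, these two target sets are disjoint. Therefore no vertex $v\in P_{t+1}\setminus\{a,b\}$ makes both $(a,v)$ and $(v,b)$ unit, contradicting the existence of a unitary triangulation containing the Pell arc. It follows that once the Triangulation Algorithm adds a Pell arc, it must eventually terminate in step~(3).

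The main obstacle I anticipate is making the norm estimate in the displayed identity rigorous for every $v\neq b$ on the $j$-side, and more generally justifying that the analysis of arcs from $a$ is not disturbed by additional dissection structure that may sit outside the immediate Pell configuration. In a type~$3$ dissection, however, the local structure at the edge $(d,c)$ forces $f_\D(v,d)$ and $f_\D(v,c)$ to have a controlled form dictated by the $j$-tower recursions, so the problem reduces to a finite case analysis along the same lines as the one performed for Lemma~\ref{lem:BasicTriangleNotBoundary} in Appendix~\ref{sec:AppA}.
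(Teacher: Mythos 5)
Your high-level reduction is sound and is essentially the same goal as the paper's: once the Pell arc $(a,b)$ is added along the basic triangle $(a,d,b)$, it must bound a second triangle $(a,v,b)$ in any completed triangulation, with both $(a,v)$ and $(v,b)$ of unit weight, so it suffices to rule out every candidate $v$. The gap is in the step you defer to computation. The claim that $f_\D(a,v)$ fails to be a unit for every $v\neq b$ on the far side of $(c,d)$ is false, and the paper's own Figure \ref{fig:UnitButWontAppear} is the counterexample: when a second Pell configuration sits beyond the roof point $b$ of the $j$-tower, the arc from $a$ to the far roof point $x$ has unit weight, and $(x,b)$ is itself a Pell arc. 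So for $v=x$ both $(a,v)$ and $(v,b)$ are units, which simultaneously falsifies your ``$j$-side'' norm claim and the disjointness of your two target sets. No norm estimate can exclude this $v$ --- the triangle $(a,x,b)$ genuinely has three unit sides. What excludes it is a compatibility argument rather than an arithmetic one: the Pell arc $(a,b)$ can only appear in $T$ together with an arc such as $(b,z)$ (needed to form the basic triangle that created $(a,b)$ or the adjacent Pell arc), and $(a,x)$ crosses $(b,z)$, so the two cannot coexist in a single triangulation. This crossing/ordering step is the essential content of the paper's proof at this point and is absent from your proposal; without it your concluding ``therefore no vertex $v$ exists'' does not follow.

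A smaller issue: you assume the Pell arc is always created from two tower arcs meeting at the shared vertex $d$. That is guaranteed only for the \emph{first} Pell arc the algorithm adds (by Lemma \ref{lem:BasicTriangleNotBoundary}, since all earlier arcs are tower arcs or arcs of $\D$); later Pell arcs could in principle be created via triangles involving earlier Pell arcs. You should therefore fix attention on the first step at which a Pell arc is added, which costs nothing but needs to be said.
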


\begin{proof}

As in Lemma \ref{lem:BasicTriangleNotBoundary}, we can write a finite list of cases for how each vertex can appear in a triangle with one side a Pell arc and a second side which is either a Pell arc, a tower arc, or from $\D$. Then, we can compute that, in each case, the third arc of the triangle cannot have unit weight. We omit the full list for sake of brevity and instead provide a few example calculations. 

In some cases, we can compute this by direct calculation. Consider the triangulation in Figure \ref{fig:LemmaAlgFails1}; arc $(u,y)$ is a tower arc while arc $(y,v)$ is a Pell arc. We assume the tower containing $u$ is an $i$-tower, the tower containing vertex $x$ is a $j$-tower and the tower containing vertex $v$ is a $k$-tower for $i,j \geq 1$ and $k \geq 0$. 


\begin{figure}
\centering
\begin{tikzpicture}[scale = 0.6]
\draw (0,0) -- (0,4) -- (-2,6) -- (-2.4,7.5) -- (-0.9,7.5) -- (2,4.5) -- (1,3.5) -- (2,2.7) -- (2,1) -- (3,1) -- (5,-1) -- (5,-2) -- (4,-2) -- (2,0) -- (0,0);
\draw (0,4) -- (1,3.5) -- (0,2.7);
\draw (0,4) -- (1.25,5.25);
\draw (-2,6) -- (-0.9,7.5);
\draw (2,1) -- (2,0) -- (3,1);
\draw ( 5,-1) -- (4,-2);
\draw (0,2.7) -- (2,2.7);
\draw(0,1) -- (2,1);
\node[left] at (-2.4,7.5){$u$};
\node[right, xshift = 5pt] at (1,3.5){$y$};
\node[left] at (0,2.7){$x$};
\node[below] at (5,-2){$v$};
\draw [decorate,
    decoration = {brace}] (-0.7,7.7) --  (2.2,4.7);
\node[right, yshift = 5pt, xshift = 3pt] at (0.7,6.2){$i \geq 1$};    
\draw [decorate,
    decoration = {brace}] (-0.2,0) --  (-0.2,2.5); 
\node[left] at (-0.2,1.25){$j \geq 1$};    
\draw [decorate,
    decoration = {brace}] (3.2,1.2) --  (5.2,-0.8);    
\node[right, yshift = 5pt, xshift = 3pt] at (4.2,0.2){$k \geq 0$};    
\end{tikzpicture}
\caption{We can directly compute that $f_\D(u,v)$ is not a unit in such a region of a type 3 dissection}\label{fig:LemmaAlgFails1}
\end{figure}

If we apply the Ptolemy relation to the intersection of $(u,v)$ and $(x,y)$ and perform some simplifications, we find that \[
f_\D(u,v) = \ell_{i+k}(\ell_{j+1} + \ell_{j} + \ell_{j-1} + (d_j + \sqrt{2}d_{j-1})).
\]


It suffices to check whether the expression $\ell_{j+1} + \ell_{j} + \ell_{j-1} + (d_j + \sqrt{2}d_{j-1})$ is a unit. 
This expression clearly has the strict lower bound of $\ell_{j+1}$, so the smallest unit this could be is $\ell_{j+2}$. The expression would equal $\ell_{j+2}$ if and only if  \[
\sqrt{2}\ell_{j+1} = \ell_{j} + \ell_{j-1} + (d_j + \sqrt{2}d_{j-1}).
\]


We reduce both sides to be in terms of $s_{j-1}$ and $d_{j-1}$. The left hand side reduces to \[
\sqrt{2}\ell_{j+1} = (4 + 3\sqrt{2}) \ell_{j-1} = (4 + 3\sqrt{2})(s_{j-1} + d_{j-1}),\]
while the righthand side reduces to   \[
 \ell_{j+1} + \ell_j + d_j + \sqrt{2} d_{j-1} = (3+\sqrt{2}) s_{j-1} + (2+3\sqrt{2}) d_{j-1}
\]

We see here that $\sqrt{2} \ell_{j+1} > \ell_{j+1} + \ell_j + (d_j + \sqrt{2}d_{j-1})$. We have that $\ell_{j+2} > \ell_{j+1}+\ell_{j} + \ell_j + (d_j + \sqrt{2}d_{j-1}) > \ell_{j+1}$ and we know there are no units strictly between $\ell_{j+1}$ and $\ell_{j+2}$. Therefore, the original expression for $f_\D(u,v)$ cannot be equal to any unit.





\begin{figure}
\centering
\begin{tikzpicture}[scale = 0.6]
\draw (0,0) -- (-1,1) -- (-1,2.5) -- (-2,3) -- (-3,4.5) -- (-3,5.5) -- (-2,5) -- (-1,3.5) -- (1,3.5) -- (1,1) -- (2,0) -- (1,-1) -- (2.5,-2.5) -- (5,-2.5) -- (6,-3) -- (5,-3.5) -- (3.5,-3.5) -- (2.5,-4.5) -- (0,-2) -- (0,0); 
\draw (-3,4.5) -- (-2,5);
\draw (-2,3) -- (-1,3.5) -- (-1,2.5);;
\draw (-1,2.5) -- (1,2.5);
\draw(-1,1) -- (1,1);
\draw (1,1) - -(0,0) -- (1,-1) - -(0,-2);
\draw(1.5,-3.5) -- (2.5,-2.5) -- (3.5,-3.5) -- (3.5,-2.5);
\draw (5,-3.5) -- (5,-2.5);
\node[above] at (-3,5.5){$u$};
\node[above, xshift = 2pt] at (-1,3.5){$z$};
\node[left, xshift = -2pt, yshift = -1] at (0,0){$y$};
\node[right] at (6,-3){$x$};
\end{tikzpicture} 
\caption{The value $f_\D(u,x)$ is a unit but this arc could never appear as we run the Triangulation Algorithm}\label{fig:UnitButWontAppear}
\end{figure}

We note there are some ways to combine two Pell arcs which produce another arc with unit weight, as shown in Figure \ref{fig:UnitButWontAppear}. One can compute that $f_{\D}(u,x)$ is a unit for such a configuration. However, it is a consequence of Lemma \ref{lem:BasicTriangleNotBoundary} and the aforementioned case-work involving triangles with one Pell arc and one tower arc that $(u,x)$ could only be produced in the Triangulation Algorithm if we already had Pell arcs $(u,y)$ and $(y,x)$. Moreover, the Pell arc $(u,y)$ would only exist in the triangulation if we already had $(y,z)$. But then we see that we will never include $(u,x)$ because $(u,x)$ and $(y,z)$ cross. 

If the pair of triangles next to $y$ were on the other side of the tower, so that the picture below $y$ is reflected across a vertical axis, then the corresponding arc could be reached in the Triangulation Algorithm. But in this case, computation shows that the arc will not have a unit weight.

These cases show that, even though we know that a Pell arc can bound a side of one triangle, we cannot find a second triangle along a Pell arc whose other sides have unit weight. Therefore, if we choose a Pell arc in the Triangulation Algorithm, the algorithm cannot possibly terminate with a unitary triangulation of the polygon. 

\end{proof}

The final piece of our proof of Theorem \ref{thm:TypeThreeNonTowerFails} is showing that it is equivalent to say that a dissection is not a gluing of towers and a dissection $\mathcal{D}$ does not admit a triangulation into tower arcs and arcs from $\mathcal{D}$.

\begin{lemma}\label{lem:towerTriangulation}
If an $n$-gon $P$ with a dissection $\mathcal{D}$ can be triangulated using only tower arcs and the arcs from dissection, then $\mathcal{D}$ must be a gluing of towers.

\end{lemma}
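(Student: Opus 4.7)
My plan is to argue that $T\cap\mathcal{D}$ (the arcs of the dissection used in $T$) themselves give the required gluing decomposition. Since $T\cap\mathcal{D}\subseteq\mathcal{D}$ is a set of pairwise non-crossing arcs, it cuts $P$ into subpolygons $P_1,\ldots,P_\ell$. Inside each $P_i$, the restriction $T|_{P_i}$ is a triangulation that uses no arc of $\mathcal{D}|_{P_i}$ strictly interior to $P_i$: any such arc would belong to $T\cap\mathcal{D}$, hence lie on the boundary of some $P_j$, not strictly inside. So $T|_{P_i}$ triangulates $P_i$ using only tower arcs relative to $\mathcal{D}|_{P_i}$. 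It therefore suffices to prove the following sub-lemma and apply it to each $P_i$: if a polygon $P'$ with dissection $\mathcal{D}'$ into triangles and quadrilaterals admits a triangulation $T'$ using only tower arcs, then $\mathcal{D}'$ must be a single tower.

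To prove the sub-lemma, I would first note that since $T'$ is a maximal non-crossing family with $T'\cap\mathcal{D}'=\emptyset$, every arc $\tau\in\mathcal{D}'$ is crossed by a tower arc in $T'$. Tower arcs only cross \emph{interior} arcs of their own tower, so every arc of $\mathcal{D}'$ is an interior arc of some tower represented in $T'$. Since the interior arcs of a tower are only of triangle-quadrilateral or quadrilateral-quadrilateral type, $\mathcal{D}'$ has no arc separating two triangles. Consequently, for every triangle $\Delta\in\mathcal{D}'$ other than $P'$ itself, at least one side of $\Delta$ lies in $\mathcal{D}'$ and is interior to a tower having $\Delta$ as its triangle; the roof of $\Delta$ is therefore the roof of some tower arc in $T'$. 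If $\mathcal{D}'$ has $m\geq 2$ triangles, then $T'$ contains tower arcs from at least two distinct roofs $r_1\neq r_2$.

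To derive a contradiction, I would then perform a finite case analysis on two non-crossing tower arcs $\gamma_1=(r_1,v_1)$, $\gamma_2=(r_2,v_2)\in T'$: the compatible (non-crossing) cases are exactly those in which either $v_1=v_2$, or $v_1,v_2$ lie on opposite sides of a shared stack between $r_1$ and $r_2$. In each compatible configuration one checks that the ``central'' region cut out of $P'$ by $\gamma_1,\gamma_2$ and $\partial P'$ has $r_1,r_2$ as diagonally opposite vertices, and that its only diagonals are the chord $(r_1,r_2)$, which is not a tower arc since it connects two distinct roofs, together with certain arcs of $\mathcal{D}'$. Completing $T'$ within this central region would therefore require a non-tower-arc diagonal or an arc of $\mathcal{D}'$, contradicting the assumption. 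Hence $m=1$, and $T'$ is forced to be a fan of $|P'|-3$ tower arcs from the unique roof $r$; this in turn forces $\mathcal{D}'$ to coincide with the tower at $r$.

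The main obstacle is the final case analysis: two tower arcs from different roofs can coexist in $T'$ in several topologically distinct configurations (notably sharing a far endpoint, or going to opposite sides of a shared stack), and one must verify in each case that the resulting central region cannot be triangulated using only tower arcs. This extends the explicit octagon computation performed earlier in the paper to arbitrary gluings of towers.
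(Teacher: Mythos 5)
Your reduction and your first structural observation are sound, and in fact sharper than anything the paper writes down: cutting along $T\cap\mathcal{D}$ is legitimate, and the remark that maximality of a triangulation forces every arc of $\mathcal{D}'$ to be crossed by some tower arc, hence to be an interior (triangle--quadrilateral or quadrilateral--quadrilateral) arc of a tower, is a genuinely useful fact. However, the argument has a real gap at its decisive step. Everything reduces to your sub-lemma, and the sub-lemma rests entirely on the final case analysis, which you explicitly defer as ``the main obstacle'' rather than carry out. Moreover, the one concrete claim you make about that analysis is false as stated: the ``central'' region cut out by $\gamma_1$, $\gamma_2$ and the boundary is in general an $N$-gon with many diagonals besides the chord $(r_1,r_2)$ and arcs of $\mathcal{D}'$ --- in particular it contains further tower arcs based at $r_1$ and at $r_2$. (Take the paper's $10$-gon example with $\gamma_1=(0,8)$ and $\gamma_2=(6,8)$: the central octagon on $0,1,\dots,6,8$ contains the tower arcs $(0,2),(0,3),(0,4),(6,2),(6,3),(6,4)$.) What actually has to be proved is a counting statement: the pairwise non-crossing tower arcs available inside that region never reach the $N-3$ needed to triangulate it. That is precisely the hard combinatorial content, and it is missing. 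A smaller point: your inference that $m\ge 2$ triangles yields two \emph{distinct} roofs $r_1\ne r_2$ also needs justification, since a priori two triangles of $\mathcal{D}'$ could share an apex.

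You should also know that the paper proves this lemma by a completely different and much shorter route: it argues directly, not by contradiction, that grouping the tower arcs of $T$ by their common roof-point endpoint exhibits the decomposition --- each group triangulates the subpolygons of one tower, and the arcs of $T\cap\mathcal{D}$ are the gluing edges. If you complete your case analysis you will have a more detailed (and in some ways more convincing) argument, but as it stands the paper's construction reaches the conclusion without the contradiction machinery, and your version is not yet a proof.
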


\begin{proof}


If it is possible to triangulate a polygon $P$ with dissection $\D$ with only tower arcs and arcs from $\D$, then we can see $\D$ as a gluing of towers as follows. For every set of tower arcs which share a common endpoint in a roof point, we take the set of subpolygons triangulated by these arcs to be one tower. The arcs from $\D$ glue these towers together. 


\end{proof}

We are now ready to prove the main result in this section.
\begin{proof}[Proof of Theorem \ref{thm:TypeThreeNonTowerFails}]

Consider an $(n+3)$-gon $P_0$, for $n > 0$, with a dissection $\D$ such that $\D$ is a type 3 dissection and $\D$ is not a gluing of towers. By Lemma \ref{lem:BasicTriangleAlongBoundary}, either the triangulation algorithm finds an arc $\tau_1$ which is a tower arc or an arc from $\D$, or the algorithm terminates at the first step. So suppose that the algorithm finds such an arc $\tau_1$. 

By Lemma \ref{lem:towerTriangulation}, at some point the algorithm must either terminate or use an arc which is not a tower arc or from $\D$. Suppose that at step $k$, the algorithm finds such an arc $\tau_k$ with $f_\D(\tau_k) \in \mathbb{Z}[\sqrt{2}]^\times$ where $\tau_k$ is not a tower arc nor an arc in $\D$; moreover, let $k$ be the minimal number such that this is true. By Lemma \ref{lem:BasicTriangleNotBoundary}, $\tau_k$ must be a Pell arc since $\tau_1,\ldots,\tau_{k-1}$ were tower arcs or arcs from $\D$. By Lemma \ref{lem:AlgFailsWithPell}, then we know the algorithm cannot terminate after we have chosen $\tau_k$.

\end{proof}

Theorems \ref{thm:Towers} and \ref{thm:TypeThreeNonTowerFails} combine to show that a unitary triangulation is only possible in a type 3 dissection if it can be decomposed into a set of towers. Type one dissections are trivial to check and Remark \ref{rem:typetwo} explained the case for type 2 dissections. 

\begin{cor}\label{cor:TypeThreeConjHolds}
 Conjecture \ref{conj:UnitaryConj} holds for type 1,2, and 3 dissections.   
\end{cor}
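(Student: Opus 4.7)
The plan is to observe that the corollary is essentially a direct consequence of Theorems \ref{thm:Towers} and \ref{thm:TypeThreeNonTowerFails}, together with brief type 1 and type 2 arguments. For the forward direction (a gluing of towers yields a unitary frieze), Theorem \ref{thm:Towers} already does all of the work uniformly, with no restriction on the vertex degrees in the dissection. So the only remaining content is the reverse direction: if $\mathcal{D}$ is a type $d \leq 3$ dissection that is \emph{not} a gluing of towers, then $f_{\mathcal{D}}$ is not unitary.

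I would treat the three types separately, in increasing order of difficulty. Type 1 is almost vacuous: every vertex being incident to at most one subpolygon forces $\mathcal{D}$ to contain no internal arcs (any internal arc would make each of its endpoints incident to at least two subpolygons), so $P$ is either a single triangle or a single quadrilateral. A triangle is a $0$-tower, and a lone quadrilateral is a $4$-angulation, so Lemma \ref{lem:friezeFrom4angulationNotUnitary} handles the only non-tower case. Type 2 is addressed in Remark \ref{rem:typetwo}: such a dissection is forced to be a stack, a tower, or a gluing of two towers, and Lemma \ref{lem:friezeFrom4angulationNotUnitary} again dispatches the one non-tower possibility.

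The type 3 case is the substantive one, and here I would simply invoke Theorem \ref{thm:TypeThreeNonTowerFails}: if $\mathcal{D}$ is a type 3 dissection that is not a gluing of towers, then every run of the Triangulation Algorithm terminates in step (3), so no unitary triangulation of $P$ exists. The one subtlety worth spelling out is why ``the algorithm always terminates in step (3)'' implies ``no unitary triangulation exists'': this follows from Lemma \ref{lem:2Ears}, which guarantees that any triangulation of $P$ has at least two basic triangles, so a hypothetical unitary triangulation could always be fed into the algorithm by choosing one of its basic-triangle arcs at each step, producing a run that never terminates in step (3).

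The main obstacle --- controlling which unit-weight arcs can actually appear as the algorithm runs --- has already been absorbed into Theorem \ref{thm:TypeThreeNonTowerFails} and its supporting lemmas (Lemmas \ref{lem:BasicTriangleAlongBoundary}, \ref{lem:BasicTriangleNotBoundary}, \ref{lem:AlgFailsWithPell}, and \ref{lem:towerTriangulation}). Assembling the corollary is then a matter of bookkeeping across the three types.
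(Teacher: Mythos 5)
Your proposal is correct and follows essentially the same route as the paper: Theorem \ref{thm:Towers} for the forward direction, Theorem \ref{thm:TypeThreeNonTowerFails} for the type 3 converse, with type 1 handled trivially and type 2 deferred to Remark \ref{rem:typetwo} (via Lemma \ref{lem:friezeFrom4angulationNotUnitary}). Your added observation that Lemma \ref{lem:2Ears} is what lets a hypothetical unitary triangulation be replayed through the Triangulation Algorithm is a worthwhile explicit justification of a step the paper leaves implicit, but it does not change the argument.
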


\section*{Acknowledgements}
This project was largely completed at the University of Minnesota School of Mathematics Summer 2021 REU program and was partially supported by NSF RTG grant DMS-1148634 and NSF grant DMS-1949896.  The authors would like to thank Vic Reiner for organizing the REU and Trevor Karn for helping with computations on Sage. We also thank the anonymous referee for helpful comments regarding the exposition of the article. 

\printbibliography

\newpage
\appendix

\section{Basic triangles with tower arcs} \label{sec:AppA}



Here we show all options for how an arc in a type three dissection can form a triangle with either two tower arcs or a tower arc and a boundary arc. In each diagram, each tower is allowed to be any size such that the configuration is possible. For example, the left tower in Diagram 17 must be at least a 1-tower. 

\begin{center}
\begin{tabular}{c|c|c|c}

   \includegraphics[scale = 0.25]{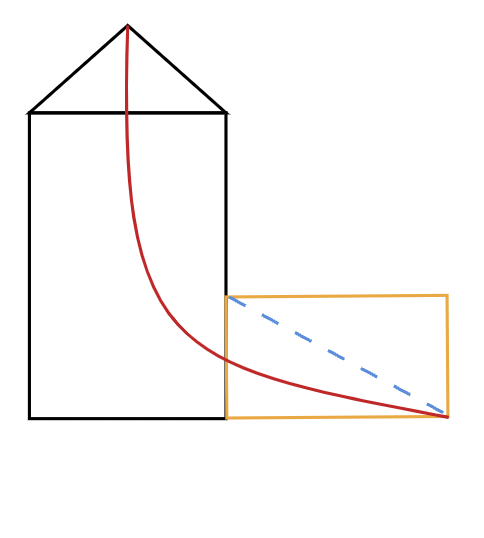}
   &
   \includegraphics[scale = 0.2]{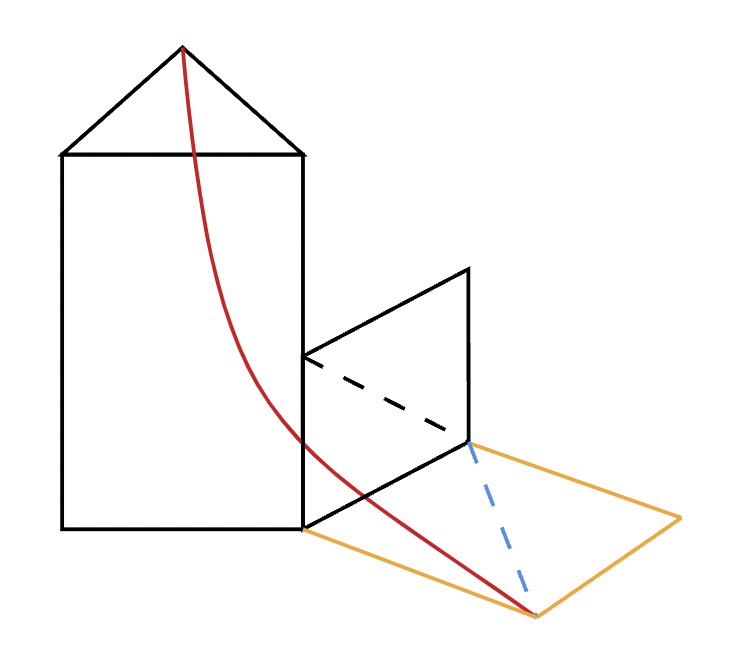}
     & \includegraphics[scale = 0.2]{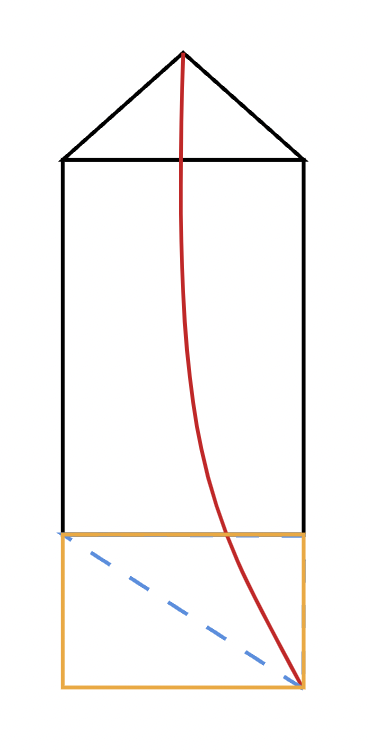} &\includegraphics[scale = 0.2]{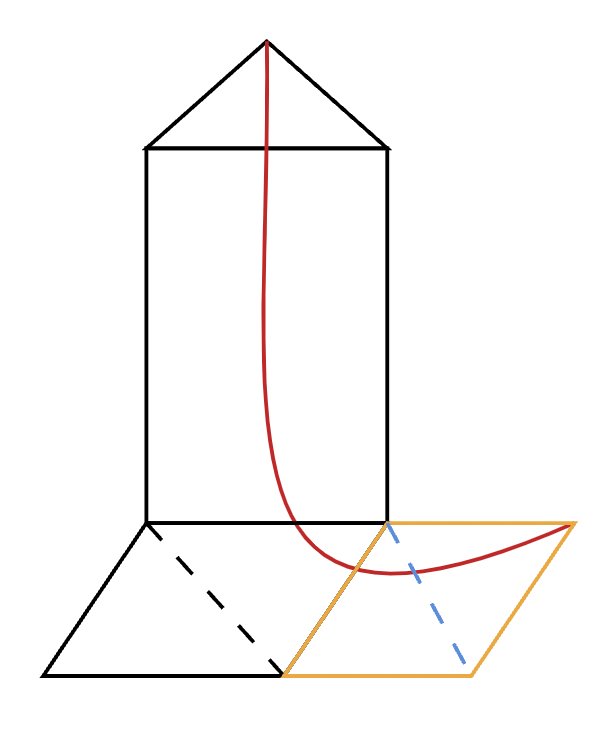}\\1&2&3&4 \\\hline
        \includegraphics[scale = 0.2]{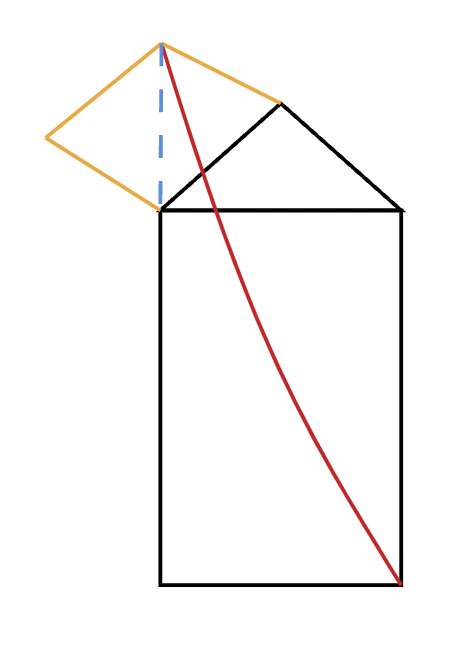} & \includegraphics[scale = 0.2]{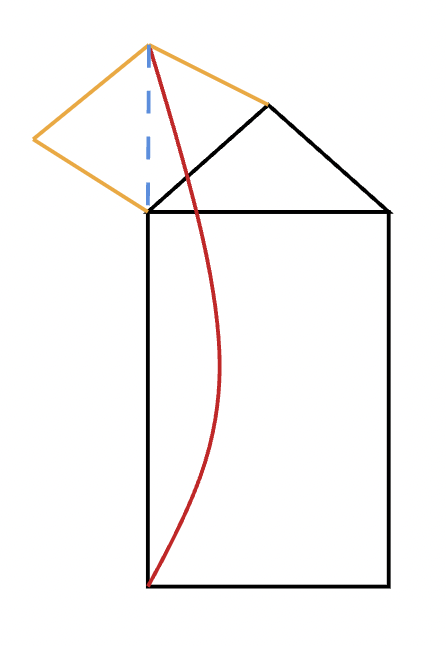} & \includegraphics[scale = 0.2]{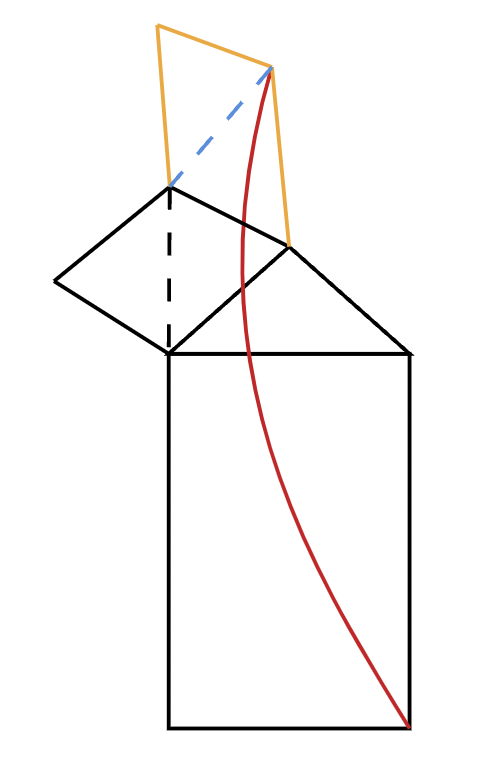}&\includegraphics[scale = 0.2]{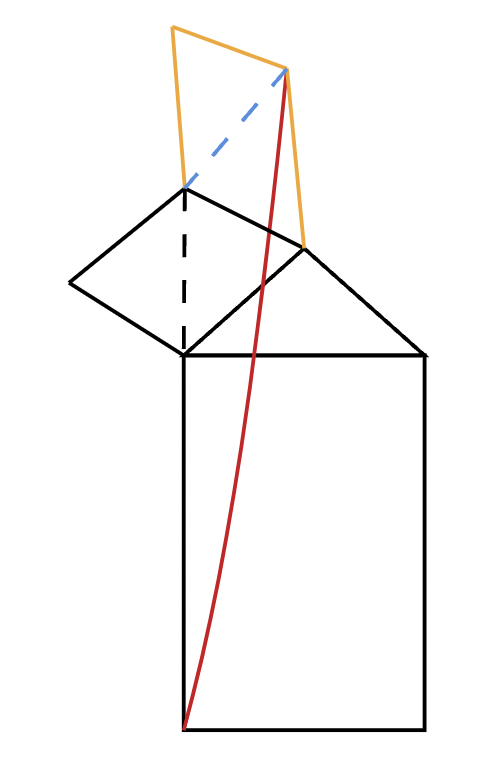}\\5&6&7&8\\\hline\includegraphics[scale = 0.17]{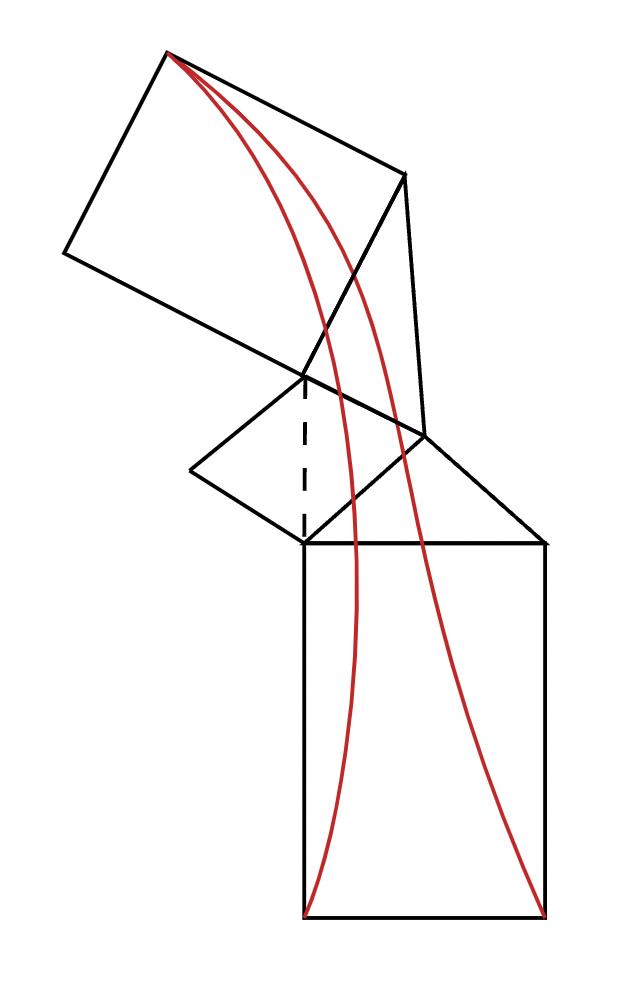}&\includegraphics[scale = 0.17]{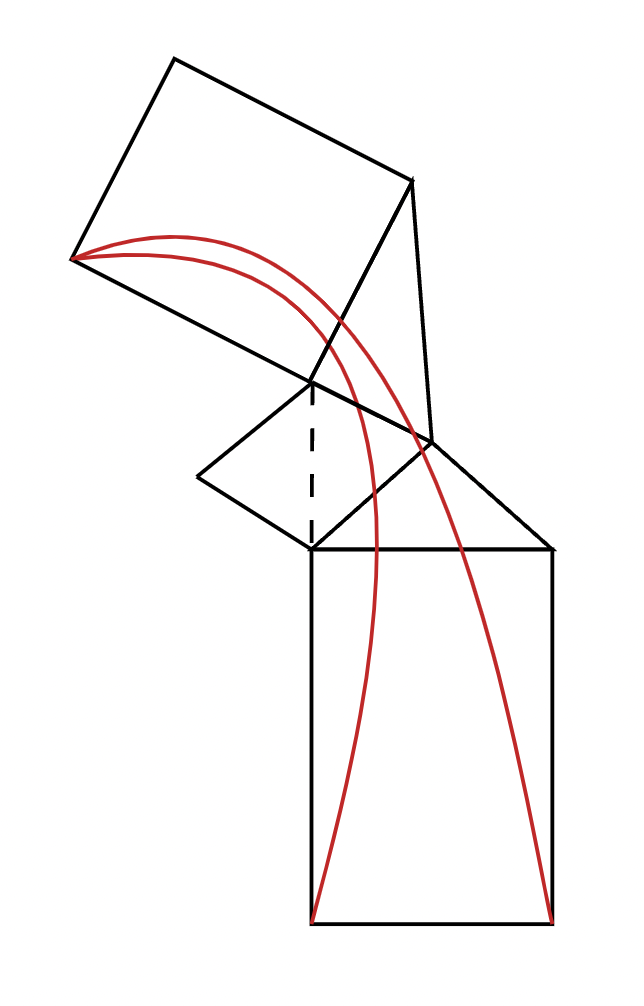}&\includegraphics[scale = 0.17]{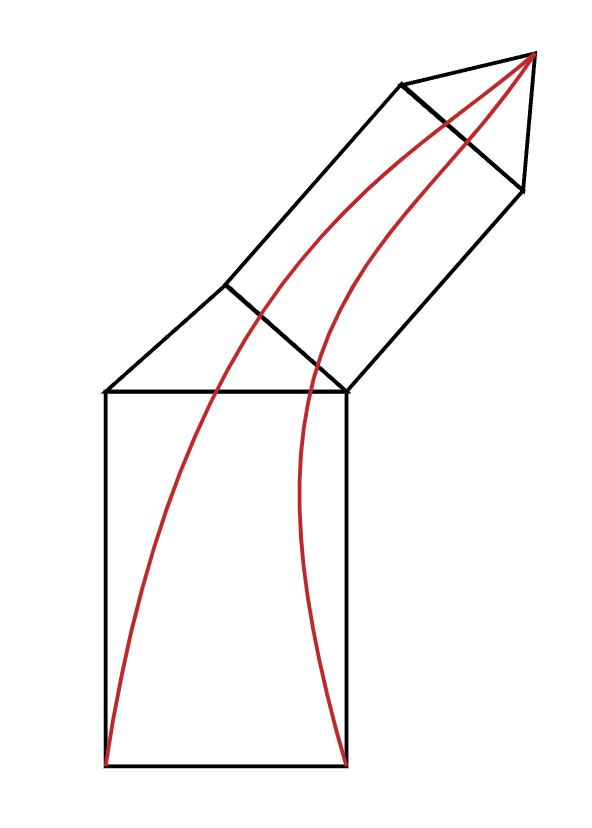}&\includegraphics[scale = 0.17]{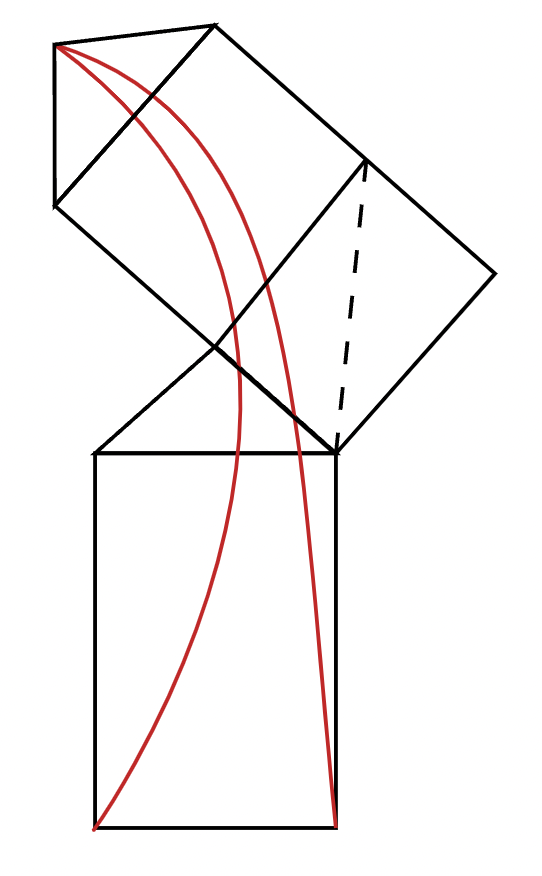}\\9&10&11&12\\\hline\includegraphics[scale = 0.17]{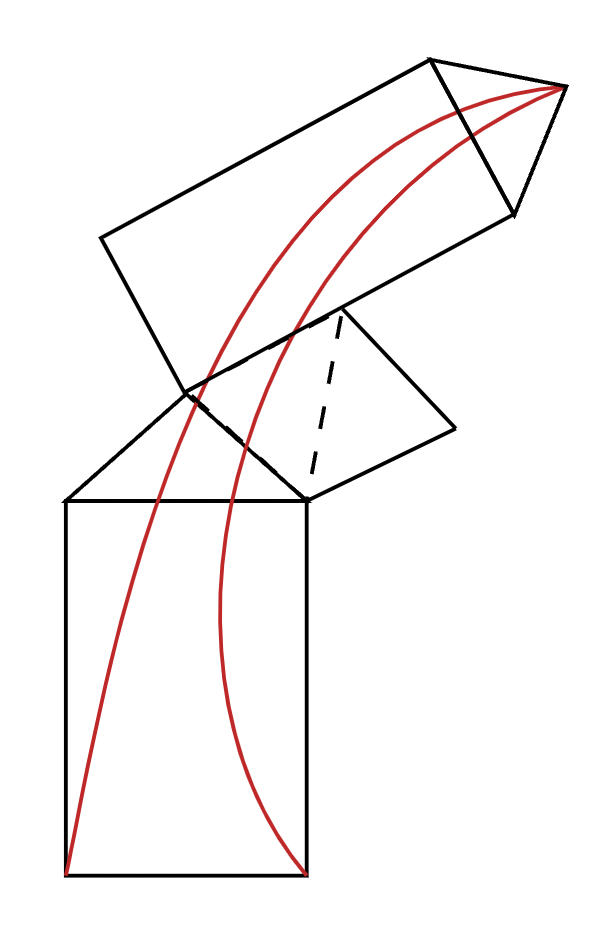}&\includegraphics[scale = 0.17]{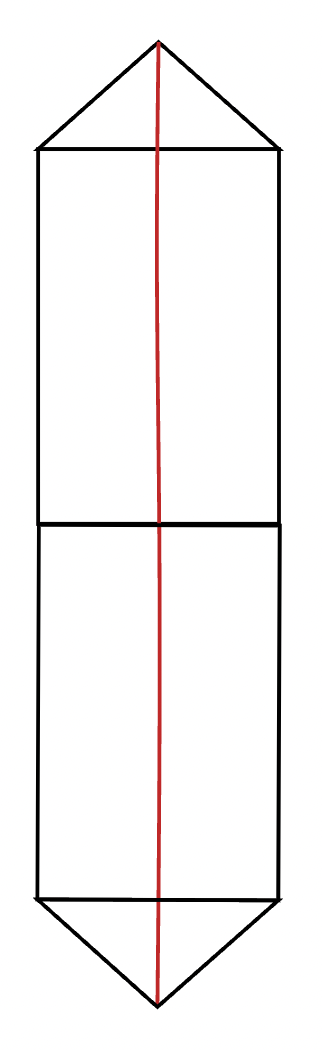}&\includegraphics[scale = 0.17]{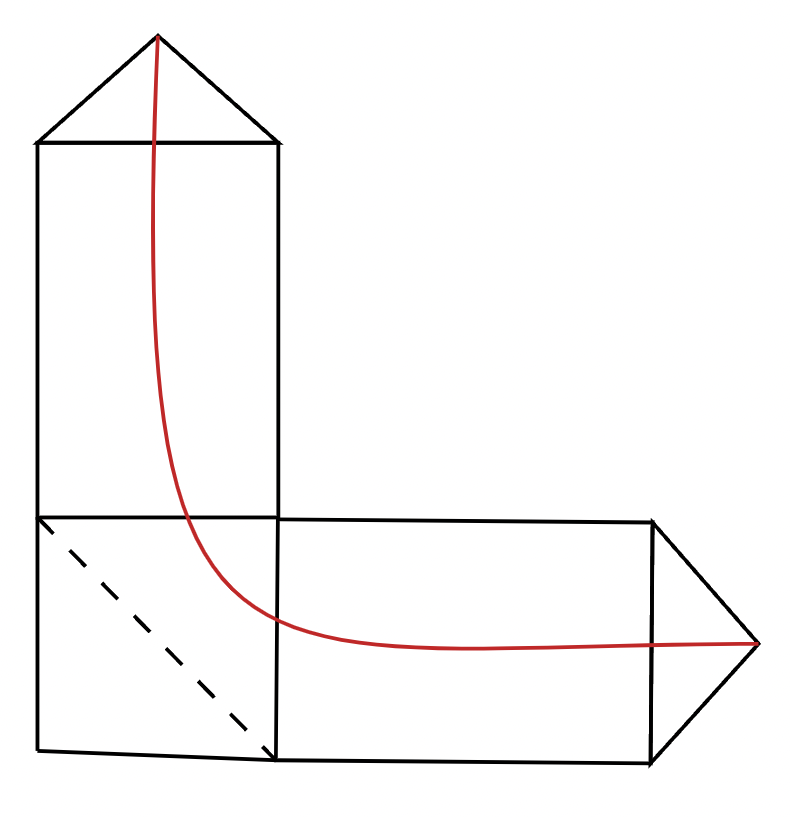}&\includegraphics[scale = 0.17]{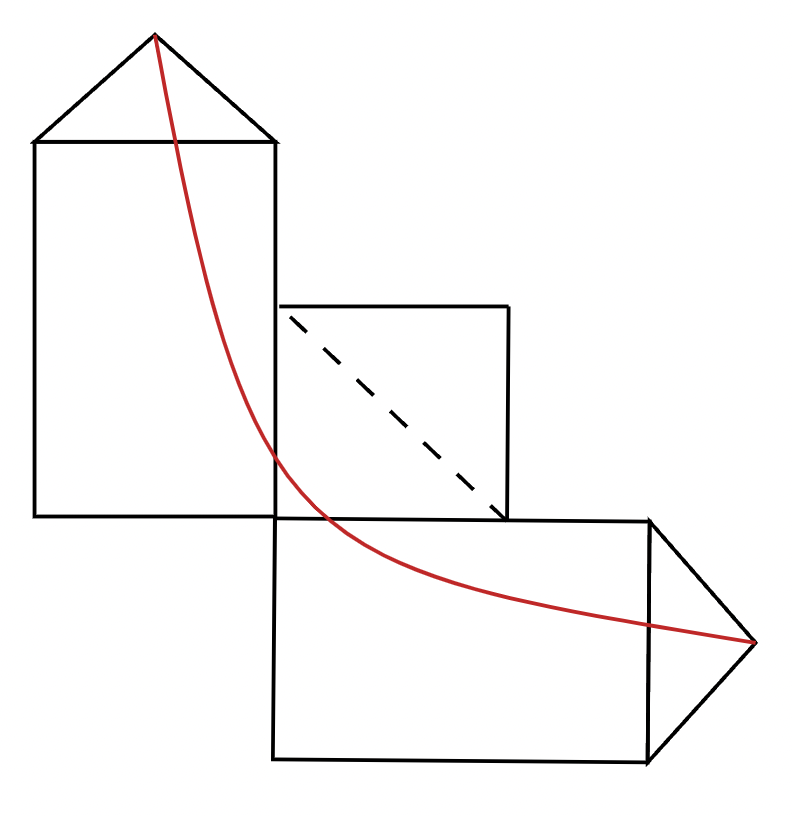}\\13&14&15&16\\\hline\includegraphics[scale = 0.17]{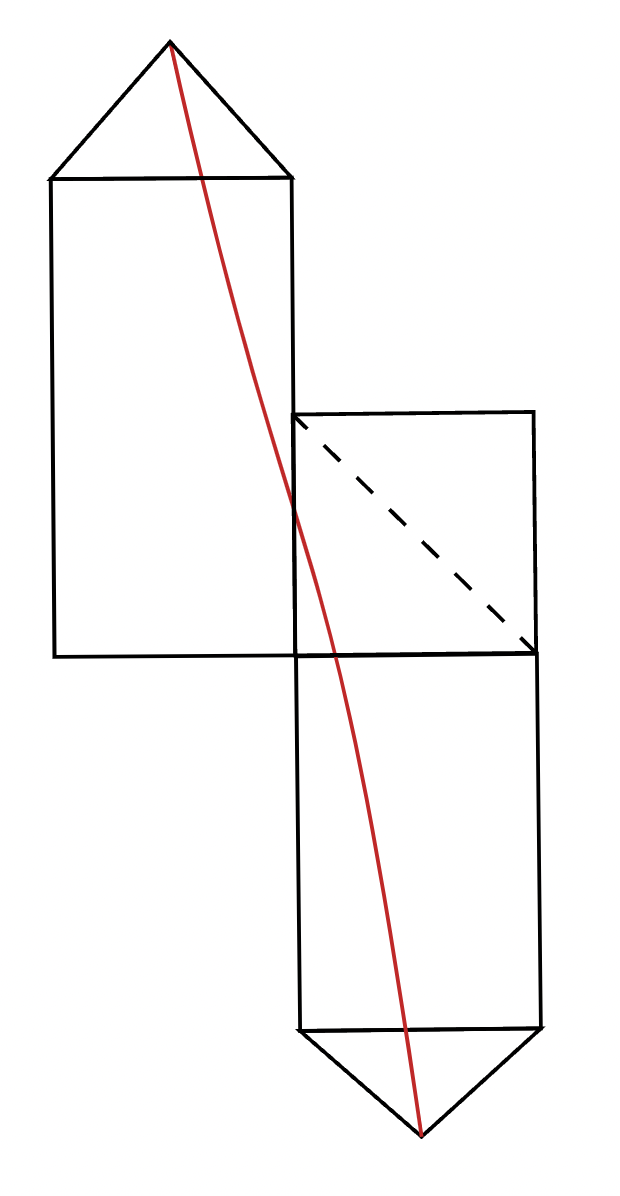}\\17

\end{tabular}
\end{center}
\newpage
Note that each yellow quadrilateral denotes a quadrilateral whose boundary is used to form a triangle with the new arc. The dotted lines denote replacing the quadrilateral with a triangle to produce a distinct case.  
\begin{center}
\includegraphics[scale = 0.27]{Diagrams/12.png}
\end{center}

For example, in Picture 2 (also shown below), we have one quadrilateral colored in yellow, and inside the yellow quadrilateral, there is blue dotted line. The blue dotted line denotes that the quadrilateral can be replaced by a triangle, which gives us another new arc as shown below. 

\begin{center}
\includegraphics[scale = 0.35]{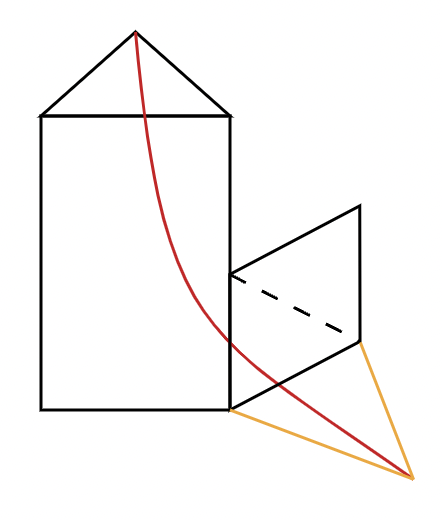}
\end{center}

Picture 2 also has a black dotted line in the black quadrilateral, which means that a triangle can replace the black quadrilateral, and the resulting new arc is a new case as shown below. 
\begin{center}
\includegraphics[scale = 0.35]{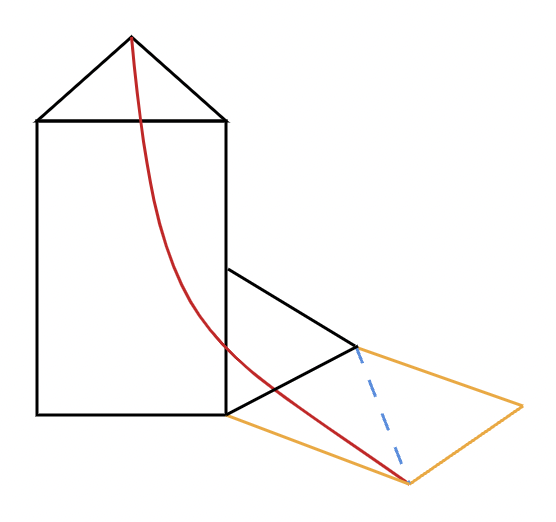}
\end{center}

Therefore, Picture 2 encapsulates four distinct cases of a new arc produced by a tower arc and a subpolygon in total, even though Picture 2 only contains one figure. This explains why the 17 figures capture all 40 possibilities. 

As was described in Lemma \ref{lem:BasicTriangleNotBoundary}, the only arcs from this table which would have unit weight are entry 2 where both optional shapes are triangles, entry 3 when the optional shape is a quadrilateral and entry 17 when the optional shape is a triangle. 

\end{document}